\newcommand\go{G^{(0)}}
\newcommand{\C}{\mathbb{C}}
\DeclareMathOperator{\supp}{supp}
\DeclareMathOperator{\lsp}{span}
\newtheorem{thm}{Theorem}[section]
\newtheorem{lemma}[thm]{Lemma}
\newtheorem{prop}[thm]{Proposition}
\newtheorem{cor}[thm]{Corollary}
\theoremstyle{definition}
\newtheorem{definition}[thm]{Definition}
\theoremstyle{remark}
\newtheorem{remark}[thm]{Remark}
\newtheorem{example}[thm]{Example}
\numberwithin{equation}{section}
\begin{document}

\title{A Steinberg algebra approach to \'etale groupoid C*-algebras}

\author[L.O. Clark]{Lisa Orloff Clark}

\author[J. Zimmerman]{Joel Zimmerman}

\address[L.O.\ Clark]{School of Mathematics and Statistics, Victoria University of Wellington, PO Box 600, Wellington 6140, NEW ZEALAND}
\email{lisa.clark@vuw.ac.nz}
\address[J. \ Zimmerman]{School of Mathematics and Applied Statistics, University of Wollongong, Northfields Avenue, Wollongong 2522, AUSTRALIA}
\email{joelz@uow.edu.au}

\thanks{The authors thank Adam S\o rensen for suggesting we consider a \emph{counterfactual history} of groupoid C*-algebras by supposing Steinberg algebras came first.  
They also thank Astrid an Huef for many
helpful conversations. 
This research was supported by a Marsden grant from the Royal Society of New Zealand.  }

\begin{abstract} We construct the full and reduced C*-algebras of an ample groupoid from its complex Steinberg algebra.  We also show that our construction gives the same C*-algebras as the standard constructions.   In the last section, we consider an arbitrary locally compact, second-countable, \'etale groupoid, possibly non-Hausdorff.   Using the techniques developed for Steinberg algebras, we show that every $*$-homomorphism from Connes' space of functions to $B(\mathcal{H})$ is automatically I-norm bounded.  Previously, this was only known for Hausdorff $G$.  \end{abstract}

\subjclass [2010]{16S99, 16G30, 22A22, 46L05}

\keywords {Groupoid $C^*$-algebra, Steinberg algebra}

\maketitle

\section{Introduction}

C*-algebras of Hausdorff groupoids were introduced by Renault \cite{Ren80} as a completion of $C_c(G)$, the complex vector space of continuous functions from the groupoid $G$ to $\C$ that are zero outside of some compact set.  The construction for non-Hausdorff groupoids was developed (simultaneously, it seems!) by Connes in \cite{Connes80, Connes79}.  Instead of taking a completion of $C_c(G)$, Connes takes a completion of $\mathscr{C}(G)$, the complex vector space of functions from $G$ to $\C$ spanned by elements of $C_c(U)$ for all open Hausdorff subsets $U \subseteq G$.  (If $G$ is Hausdorff, $C_c(G)$ can be identified with $\mathscr{C}(G)$.)  In the full C*-algebra, the completion is taken with respect to the full norm, defined for $f \in \mathscr{C}(G)$ as is the supremum of the operator norms $\|\pi(f)\|$ taken over all bounded *-homomorphisms $\pi: \mathscr{C}(G) \to B(\mathscr{H})$ for some Hilbert space $\mathscr{H}$.

Exel gives an alternate approach  in \cite{Exe08} for \'etale groupoids, again by taking completions of $\mathscr{C}(G)$.   
He defines the full norm similarly; however, he takes the supremum over all *-homomorphisms, that is, he does not require them to be bounded.  
He demonstrates that when $G$ is \'etale,  the additional assumption is not needed to define a norm on $\mathscr{C}(G)$.  
This leaves open the question of whether or not Exel's  construction produces the same C*-algebra as Renault and Connes' constructions.

An \emph{ample} groupoid is an \'etale groupoid that has a basis of compact open sets.  The class of C*-algebras of ample groupoids contains many important subclasses including graph and higher-rank graph C*-algebras \cite{KP, KPRR}, Exel-Pardo algebras \cite{EP}, inverse semigroup algebras \cite{Pat99} and all Kirchberg algebras \cite{CFaH}.

In this paper, we build the full and reduced C*-algebra of an ample groupoid $G$ by taking completions of its Steinberg algebra $A(G)$, the complex vector space of functions from $G$ to $\C$ spanned by characteristic functions $1_B$ for all compact open Hausdorff subsets $B \subseteq G$,  (see \cite[Definition~4.1]{Ste10}).  As in \cite{BCFS} and \cite{CEPSS}, we advance the notion that the Steinberg algebra is a sort of Rosetta stone that can be used to make connections between a more general \'etale groupoid its C*-algebra.  

Given an ample groupoid $G$  with Hausdorff unit space, defining its reduced C*-algebra from $A(G)$ is straightforward;  moving from $\mathscr{C}(G)$ to $A(G)$ doesn't cause any problems.  To define the full C*-algebra,  we use Exel's approach from \cite{Exe08}.   The full norm of a function $f \in A(G)$ will be the supremum of $\|\pi(f)\|$ taken over all *-homomorphisms $\pi: A(G) \to B(\mathscr{H})$.  However, to show that this supremum is finite (without making a boundedness assumption on $\pi$), we require some innovation.  In Exel's original argument, for every compact open set $U \subseteq \go$, the algebra of continuous functions, $C(U)$ is a C*-algebra sitting inside $\mathscr{C}(G)$. Thus C*-algebraic tools are available. For us, $A(U)$ is not a C*-algebra (there is no obvious C$^*$-norm in which $A(U)$ is complete) so we have to work a little harder.  Once we establish that particular kinds of algebra *-homomorphism are norm decreasing  with respect to the uniform norm on $A(U)$, (see Proposition~\ref{prop:normdec}),  Exel's strategy for $\mathscr{C}(G)$ goes through.   We hope our construction will be a good starting point to learn about groupoid C*-algebras as there is less technical overhead.   Our approach also unifies the Hausdorff and non-Hausdorff  construction since the definition of $A(G)$ is
the same regardless of the Hausdorff property.

To reconcile our construction with the standard ones, we start by showing in Proposition~\ref{prop:autobound} that any $*$-homomorphism $\pi$ from $A(G)$ 
to $B(\mathscr{H})$ is automatically bounded with respect to the inductive limit topology (see definition~\ref{def:ILT}).  Then, for second countable $G$,  Renault's disintegration theorem \cite[Theorem~7.8]{MW08} implies $\pi$ is automatically I-norm bounded.   This proof depends on the structure of $A(G)$ and uses the basis of compact open sets in $G$.   But the techniques developed in the proof are our Rosetta stone.  By seeing how things played out in $A(G)$, we can translate to $\mathscr{C}(G)$ for more general  \'etale groupoids
using approximation arguments.   In particular, we show in Corollary~\ref{cor:etaleautobound}:  if $G$ is a second countable \'etale groupoid and $\pi$ is a  $*$-homomorphism from $\mathscr{C}(G)$ 
to $B(\mathscr{H})$, then $\pi$ is I-norm bounded.  Previously, this was an open question for non-Hausdorff groupoids.  

After a brief preliminaries section,  we give a self contained construction of the reduced and full  groupoid C*-algebra as the completion of $A(G)$ in Sections~\ref{sec:reduced} and \ref{sec:full}.   Note that our construction does not require $G$ to be second countable. 
 In section~\ref{sec:autobound} we show that $*$-homomorphisms from $A(G)$ to $B(\mathcal{H})$ are automatically bounded and in Section~\ref{sec:etaleautobound} we consider second-countable  \'etale groupoinds that are not necessarily ample.  In the last section, we show that the C*-algebras we have constructed are the same as the standard ones  (see Theorem~\ref{thm:recovery}).

\section{Preliminaries}
First some terminology.  When we say a subset $X$ of a topological space is \textbf{compact},  we mean that every open cover has a finite subcover, even for non-Hausdorff $X$.  We say a topological space  is \textbf{locally compact} if every point has a compact neighborhood base, again, without making any Hausdorff assumption.  The first author has also called this `locally locally compact', see \cite{CaHR} for further discussion.  

A groupoid is a generalisation of a group where the binary operation is only partially defined.  See \cite{Ren80} for a precise definition.   Let $G$ be a groupoid and $\go$ be the set of units in $G$ so that for each $\gamma \in G$ we have the maps
\[s(\gamma)=\gamma^{-1}\gamma \quad \text{and} \quad r(\gamma) = \gamma\gamma^{-1}\]
from $G$ to $\go \subseteq G$.
An \emph{\'etale groupoid} is a topological groupoid such that $s$ (and hence $r$) is a local homeomorphism.  In an \'etale groupoid, $\go$ is open in $G$; $\go$ is closed in $G$ if and only if $G$ is Hausdorff (see, for example, \cite[Proposition~3.10]{EP16}). 
We call an open set 
$B \subseteq G$ an \emph{open bisection} if $r$ and $s$ restricted to $B$ are homeomorphisms onto open subsets of $\go$. 
It is straightforward to check that a topological groupoid $G$ is \'etale groupoid if and only if $G$ has a basis of open bisections.  

We say $G$ is an \emph{ample groupoid} if $G$ has a basis of compact open bisections.  Equivalently, $G$ is ample if $G$ is locally compact and \'etale and the unit space $\go$ is totally disconnected \cite[Proposition~4.1]{Exe10}.  The collection of all compact open bisections in an ample groupoid $G$ forms an inverse semigroup with product and inverse 
\[BD:= \{bd : b\in B, d \in D, s(b)=r(d)\}\quad \text{and} \quad B^{-1} := \{b^{-1} : b \in B\}.\] 
(See \cite[Proposition~2.2.3]{Pat99}.)   
Note that, if $G$ is not Hausdorff, the intersection of compact open bisections need not be compact.

Because ample groupoids are necessarily \'etale, we have that the restriction of the range map to a compact open bisection $B$ induces a homeomorphism between $B$ and $r(B)\subseteq \go$.  So if $\go$ is Hausdorff,  then $B$ is also Hausdorff.  In this situation, techniques developed for Hausdorff groupoids can sometimes be employed locally.   For example, suppose that $B$ is a compact open bisection and that $\{D_i\}_{i \in I}$ is a finite cover of $B$ such that each $D_i \subseteq B$ is compact open. Then because $B$ is Hausdorff and each $D_i$ is compact, each $D_i$ is closed in the subspace $B$ and hence for any compact open $A \subseteq B$, 
we have that $D_i \setminus A$ is compact open.  So can find a disjoint cover of $B$ by compact open sets  $\{D_i'\}_{i \in I}$ where $D_1'=D_1$ and for $i>1$
 \[D_i' := D_i \setminus \bigcup_{j=1}^{i-1} D_j.\]
When we refine a cover in this manner, we say that we \emph{disjointify} the cover.

Let $G$ be an ample groupoid such that $\go$ is Hausdorff.   
For $B \subseteq G$, we write $1_B$ for the function from $G$ to $\mathbb{C}$ that takes value 1 on $B$ and 0 outside of $B$, that is, the characteristic function of $B$.
The complex Steinberg algebra of $G$ is the complex vector space 
\[A(G) := \lsp\{1_B \mid B \text{ is a compact open bisection}\}\]
where addition and scalar multiplication are defined pointwise.  
To make clear our notation,  for each $f \in A(G)$, we can write 
\[f=\sum_{B\in {F}} a_B1_B\] such that $F$ is a finite collection of compact open bisections and for each $B \in F$,  $a_B \in \mathbb{C}$.  
Then $A(G)$ is a $*$-algebra with convolution and involution of generators is given by 
\[1_B1_D= 1_{BD} \quad  \text{and} \quad (1_B)^* = 1_{B^{-1}}\] which distribute to give the usual convolution and involution formulae:
for $f,g \in A(G)$, and $\gamma \in G$
\[f*g(\gamma) = \sum_{\alpha\beta = \gamma} f(\alpha)g(\beta) \quad \text{and } \quad f(\gamma)^* = \overline{f(\gamma)} .\]
We will sometimes write convolution as $fg$ omitting the $*$.  
By \cite[Proposition~4.3 and Definition~4.1]{Ste10}, $A(G)$ is also equal to the span of $1_B$ ranging over all compact open Hausdorff subsets $B$ of $G$.

Since we are working with potentially non-Hausdorff groupoids, we need to take extra care with compact sets and closures. 
So for example, if $D$ is a compact open bisection, then $1_D$ is a continuous function on $D$. If $G$ is Hausdorff, the compact subsets of $G$ are closed and therefore $D$ is a clopen subset of $G$, which means that $1_D$ is a continuous on $G$. However, if $G$ is not Hausdorff then $D$ may not be closed in $G$ and then $1_D$  is not continuous on all of $G$.   See Example~\ref{ex:2snake}.

For $f \in A(G)$, we write
 \[\supp^o(f) = \{\gamma \in G : f(\gamma) \neq 0\}.\]
 Note that this set might not be open in $G$ and, although it is contained in a compact set (so the function is ``compactly supported''), its closure might fail to be compact.  See Example~\ref{ex:2snake}.  For more details on Steinberg algebras, see \cite{Ste10}. 

\begin{example}
\label{ex:2snake}
A basic non-Hausdorff example to keep in mind  is the ``two-headed snake'' groupoid:
the unit space $\go$ is the Cantor set (viewed as a subset of $[0,1] \subseteq \mathbb{R}$) and
$G = \go \cup \{\gamma_1\}$
where $s(\gamma_1)=r(\gamma_1) = 0$ and composition $\gamma_1\gamma_1 = 0$.  Equivalently, $G$ is a group bundle over the Cantor set where all the groups are trivial except for the one at 0, which is $\mathbb{Z}_2$.  To make $G$ into a topological groupoid, we take the usual topology on $\go$, that is, the subspace topology of $\mathbb{R}$.  In addition, for every open set $U$ containing 0, we add an open set $U_1$ where we remove 0 and add $\gamma_1$.  
Then $G$ is an ample groupoid that has a Hausdorff unit space but is not Hausdorff itself.  

The unit space is a compact open subset of $G$ so $1_{\go} \in A(G)$ but since $\go$ is not closed in $G$, $1_{\go}$ is not continuous.
Let $B = (\go \setminus \{0\}) \cup \{\gamma_1\}$ which is a compact open bisection in $G$.  
Then for $f = 1_{\go} - 1_B$, we have $\supp^o(f) = \{0, \gamma_1\}$ which is not open.   

Since $G$ is compact, for every $f \in A(G)$,   the closure $\overline{\supp^o(f)}$ is compact.  
However, we can add more heads to the snake to get an example where this fails:  let $H$ be 
the group bundle over the Cantor set where all the groups are trivial except for the one at 0, 
which is $\mathbb{Z}$.   Denote the non-unit elements of $H$ by $\gamma_i$ for  $i \in \mathbb{Z}\setminus \{0\}$.  For the topology, we have the usual topology on the Cantor set and for each $i \in \mathbb{Z}$ and 
each open set containing 0, we add an open set where we replace 0 with $\gamma_i$. Then $H$ is an ample groupoid with Hausdorff unit space.  
Now $1_{\go} \in A(G)$ but  $\overline{\supp^o(1_{\go})} = G$ is not compact.
\end{example}

We will need the following lemma in the sequel.  It is obvious when $G$ is Hausdorff but more generally it is surprisingly technical.
\begin{lemma}
\label{lem:supportsinC}
Let $G$ be an ample groupoid such that $\go$ is Hausdorff.   
Let $f \in A(G)$ with $\supp^o(f) \subseteq C$ where $C \subseteq G$ is a compact open subset.  
Suppose  $f  = \sum_{B\in {H}} a_B1_B$ where ${H}$ is a finite collection of compact open bisections.  
Then we can write 
$f  = \sum_{D\in F} a_D1_D$ where $F$ is a finite collection of compact open bisections such that for each $D \in F$ we have $D \subseteq C$.
\end{lemma}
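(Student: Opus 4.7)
The plan is to exploit the convolution identity
\[ f = 1_{r(C)} \ast f \ast 1_{s(C)},\]
which holds because $\supp^o f \subseteq C$ forces $r(\gamma) \in r(C)$ and $s(\gamma) \in s(C)$ whenever $f(\gamma)\neq 0$. Both $r(C)$ and $s(C)$ are compact open in the Hausdorff unit space $\go$, hence clopen, so $1_{r(C)}, 1_{s(C)} \in A(G)$. Since convolution on the left by $1_U$ for $U \subseteq \go$ compact open acts as pointwise multiplication by $1_U \circ r$ (and symmetrically on the right), one checks the identity directly on generators.

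Applying the identity to the given decomposition $f = \sum_B a_B 1_B$ yields
\[ f = \sum_B a_B \, 1_{B^\dagger}, \qquad B^\dagger := B \cap r^{-1}(r(C)) \cap s^{-1}(s(C)).\]
Since each $B$ is a bisection and $r|_B, s|_B$ are homeomorphisms onto compact open (hence clopen, by Hausdorffness of $\go$) subsets of $\go$, the set $B^\dagger = (r|_B)^{-1}(r(B) \cap r(C)) \cap (s|_B)^{-1}(s(B) \cap s(C))$ is a compact open sub-bisection of $B$. This already writes $f$ as a sum of characteristic functions of compact open bisections, all of which are contained in $r^{-1}(r(C)) \cap s^{-1}(s(C))$.

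The remaining difficulty is that in the non-Hausdorff setting this latter set may properly contain $C$: for instance, in the two-headed snake of Example~\ref{ex:2snake}, the extra element $\gamma_1$ has range and source $0 \in \go$, so $\gamma_1 \in r^{-1}(r(C)) \cap s^{-1}(s(C))$ for any compact open $C \supseteq \{0\}$, even when $\gamma_1 \notin C$. To complete the refinement, I would cover $C$ by finitely many compact open bisections $C_1, \dots, C_n$ and, working inside each compact Hausdorff bisection $B^\dagger$ (which is zero-dimensional, inheriting this from $\go$), further split $B^\dagger$ into sub-bisections that each $r$-project into a single $r(C_j)$, using the disjointification trick available in the Hausdorff $B^\dagger$. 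For any resulting sub-bisection $D$ that is still not contained in $C$, the hypothesis $\supp^o f \subseteq C$ guarantees that $f$ vanishes on $D \setminus C$, which forces the sum of the coefficients of the bisections passing through such points to vanish. The main obstacle is promoting this pointwise cancellation to a genuine algebraic simplification inside $A(G)$: one must use the Hausdorff structure of each bisection to combine overlapping ``ghost'' contributions via set-theoretic operations (intersections, symmetric differences, disjointifications) on clopen subsets of $\go$, iterating until every surviving term is a compact open bisection contained in $C$.
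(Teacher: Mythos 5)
There is a genuine gap. The opening move ($f = 1_{r(C)} * f * 1_{s(C)}$, giving $f=\sum_B a_B 1_{B^\dagger}$ with $B^\dagger = B\cap r^{-1}(r(C))\cap s^{-1}(s(C))$) is correct as far as it goes, but as you yourself observe it only confines the supports to $r^{-1}(r(C))\cap s^{-1}(s(C))$, which in the non-Hausdorff setting can strictly contain $C$. Everything after that is a description of what a proof would have to do rather than a proof: you write that one ``must use the Hausdorff structure of each bisection to combine overlapping ghost contributions \dots iterating until every surviving term is a compact open bisection contained in $C$,'' but you never construct the new decomposition, never verify it still equals $f$, and never argue the iteration terminates. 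The intermediate step of splitting each $B^\dagger$ so that it $r$-projects into a single $r(C_j)$ does not help: having $r(D)\subseteq r(C_j)$ still does not force $D\subseteq C_j$ (the two-headed snake again), and it does not set up the cancellation you need.

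The missing content is precisely the heart of the paper's argument, which runs an induction on $|H|$. Pick $B_1\in H$ with $B_1\setminus C\neq\emptyset$; this set is closed in the Hausdorff $B_1$, hence compact. For each $\gamma\in B_1\setminus C$ one has $f(\gamma)=0$, so the collection $S_\gamma$ of all $B\in H$ containing $\gamma$ satisfies $\sum_{B\in S_\gamma}a_B=0$; ampleness gives a compact open bisection $D_\gamma$ with $\gamma\in D_\gamma\subseteq\bigcap_{B\in S_\gamma}B$. A finite, disjointified subcover of $B_1\setminus C$ by such $D_\gamma$ is then subtracted from every $B\in S_\gamma$ simultaneously, and a pointwise check (splitting on whether $\alpha$ lies in some $D_\gamma$) shows the resulting sum still equals $f$, because exactly the coefficients in $S_\gamma$ — whose sum is zero — are dropped at such points. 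This yields $D_{B_1}\subseteq C$ and reduces $|H|$ by one. Your proposal gestures at the coefficient cancellation but omits the key device of choosing $D_\gamma$ inside the \emph{common} intersection $\bigcap_{B\in S_\gamma}B$, which is what turns the pointwise identity $f(\gamma)=0$ into an identity of functions on an open set; without that, the ``iteration'' has no well-defined step and no termination argument.
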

 \begin{proof}
 We do a proof by induction on $|H|$, the size of $H$.  The base case $|H| =1$ is clear.  
 For the inductive step, suppose the lemma holds for functions $f \in A(G)$ with $|H| =n \geq 1$.  
Fix $f \in A(G)$ with $\supp^o(f) \subseteq C$ such that 
$f  = \sum_{B\in {H}} a_B1_B$ where $|H|=n+1$.  Suppose there exists $B_1 \in {H}$ such that $B_1 \setminus C \neq \emptyset$.  (If no such $B_1$ exists, we are done.)   
Notice that $B_1 \setminus C$ is closed in $B_1$ and hence is compact.  Fix $\gamma \in B_1 \setminus C$.  Since $f(\gamma)=0$, there exists a maximal $S_{\gamma} \subseteq {H}$ such that
\[B_1 \in S_{\gamma}, \ \ \gamma \in \bigcap_{B \in S_\gamma} B, \text{ and }\sum_{B \in S_{\gamma}}a_B = 0.\]  
Because $G$ is ample and elements of ${H}$ are open, we can find a compact open bisection $D_{\gamma}$ such that  
\[\gamma \in D_{\gamma}\subseteq \bigcap_{B \in S_{\gamma}}B.\]
 The collection $\{D_{\gamma}\}_{\gamma \in B_1 \setminus C}$ covers $B_1 \setminus C$ and hence has a finite subcover
$\{D_{\gamma}\}_{\gamma \in I}$ for some finite $I \subseteq B_1 \setminus C$.  
Since we are inside the Hausdorff space $B_1$, we can disjointify and assume this finite cover of $B_1 \setminus C$ is disjoint.  
Now define compact open bisections for each $B \in {H}$
\[
D_{B} := B \setminus \bigcup_{\{\gamma \in I : B \in S_{\gamma}\}} D_{\gamma}.
\]
Notice that for our fixed $B_1$ we have $D_{B_1} \subseteq C$.
Let $F_1 = \{D_{B} : B \in {H}\}$.
We claim that 
\[f  = \sum_{D_{B} \in {F_1}} a_{B}1_{D_{B}}.\]
Fix $\alpha \in G$.  
We consider two cases.  First suppose 
$\alpha \notin D_{\gamma}$ for all $\gamma \in I$.   
Then 
\[\alpha \in B \in F  \iff  \alpha \in D_{B} \in F_1\]
 so the claim is true.
Now suppose there exists $\gamma \in I$ such that $\alpha \in D_{\gamma} \subseteq \bigcap_{B  \in S_{\gamma}}B$.  
Since the collection is disjoint there is only one such $\gamma \in I$.
By construction $\sum_{B \in S_{\gamma}}a_{B} = 0$.
So 
\begin{align*}
f(\alpha) &= \sum_{\{B \in {H} : \alpha \in B\}} a_B \\
&=  \sum_{\{D_B \in F_1 : \alpha \in D_B\}} a_B +  \sum_{ B \in S_\gamma} a_B\\
&=  \sum_{\{D_B \in F_1 : \alpha \in D_B\}} a_B
\end{align*}
proving the claim.
Thus \[f =  \sum_{D_{B} \in {F_1}} a_{B}1_{D_{B}} = a_{B_1}1_{D_{B_1}} + \sum_{D_{B} \in {F_1}, B \neq B_1} a_{B}1_{D_{B}} \]
with $D_{B_1} \subseteq C$.
Now the inductive hypothesis applies to the second term, proving the lemma
\end{proof}


\section{The reduced C*-algebra}
\label{sec:reduced}

To define the reduced C*-algebra of an ample groupoid $G$ from $A(G)$, the usual approach for $\mathscr{C}(G)$ goes through  virtually unchanged so we just outline the process.  
The idea is to embed $A(G)$ into $B(\mathcal{H})$ for a specific $\mathcal{H}$ using an injective $*$-homomorphism $\pi$ and then define the reduced norm of a function in $A(G)$ to be the operator norm of its image under $\pi$. Then the reduced C*-algebra $C^*_r(G)$ is the completion of $A(G)$ with respect to this norm,  and is isomorphic to the closure of $\pi(A(G))$ in $B(\mathcal{H})$.

The \emph{$I$-norm} for  $f \in A(G)$ is defined as
\[ \|f\|_I := \sup_{x \in \go} \bigg\{\sum_{\{\gamma : s(\gamma)=x\}}|f(\gamma)|, \sum_{\{\gamma : r(\gamma)=x\}} |f(\gamma)|\bigg\}.\]
We begin by introducing for each $x \in G^{(0)}$, an I-norm bounded *-homomorphism
 \[
 \pi_x: A(G) \to {B}(\ell^2(G_x))
 \] which is called the \emph{regular representation of $A(G)$ at $x$}. 
Write the standard orthonormal basis of $\ell^2(G_x)$ as the indicator functions $\{\delta_{\gamma}\}_{\gamma \in G_x}.$
Then if $B$ is a compact open bisection in $G$ and $\gamma \in G_x$, we define $\pi_x$ on generators by the formula 
\[\pi_x(1_B)\delta_{\gamma} = \delta_{B\gamma}\]
where $B\gamma$ is shorthand for the product of sets $B\{\gamma\}$. Since $B$ is a bisection, $B\gamma$ is either empty or a singleton. 
More generally, we have the following.  The proof follows similarly to the proof of \cite[Proposition~3.3.1]{Sims18}.

\begin{prop}For each $x \in G^{(0)}$, there exists a $*$-homomorphism $\pi_x : A(G) \to {B}(\ell^2(G_x))$ defined by 
\[\pi_x(f)\delta_{\gamma} = \sum_{\alpha \in G_{r(\gamma)}} f(\alpha)\delta_{\alpha\gamma}\]
such that $\|\pi_x(f)\| \leq \|f\|_I$.
\end{prop}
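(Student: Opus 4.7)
My plan is to define $\pi_x(f)$ directly on the dense subspace of finitely supported functions in $\ell^2(G_x)$ via the given pointwise formula, verify the $I$-norm bound by a Cauchy--Schwarz argument, extend by continuity, and then check the algebraic properties on basis vectors. The key observation that sidesteps any well-definedness issues is that the formula
\[\pi_x(f)\delta_\gamma = \sum_{\alpha \in G_{r(\gamma)}} f(\alpha)\delta_{\alpha\gamma}\]
refers only to $f$ as a function on $G$ and not to any particular representation $f = \sum_{B\in F} a_B 1_B$. So well-definedness and linearity of $f \mapsto \pi_x(f)$ on $A(G)$ will be automatic. Finiteness of the sum is immediate too: each compact open bisection $B$ meets $G_{r(\gamma)}$ in at most one point, so only finitely many $\alpha$ contribute.

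Next I would prove the $I$-norm bound. For $\xi$ a finitely supported function on $G_x$ and $\beta \in G_x$, a direct reindexing yields
\[(\pi_x(f)\xi)(\beta) = \sum_{\{\alpha\,:\, s(\alpha) = r(\beta)\}} f(\alpha)\xi(\alpha^{-1}\beta).\]
Splitting $|f(\alpha)| = |f(\alpha)|^{1/2}\cdot |f(\alpha)|^{1/2}$ and applying Cauchy--Schwarz to the sum gives
\[|(\pi_x(f)\xi)(\beta)|^2 \leq \|f\|_I \sum_{\{\alpha\,:\, s(\alpha) = r(\beta)\}} |f(\alpha)|\,|\xi(\alpha^{-1}\beta)|^2.\]
Summing over $\beta \in G_x$ and changing variables via $\gamma := \alpha^{-1}\beta$ (so that $\beta = \alpha\gamma$ with $\gamma \in G_x$ and $\alpha \in G_{r(\gamma)}$) and then pulling out $\sum_{\alpha} |f(\alpha)| \leq \|f\|_I$ one more time produces $\|\pi_x(f)\xi\|^2 \leq \|f\|_I^2 \|\xi\|^2$. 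Hence $\pi_x(f)$ extends to a bounded operator on $\ell^2(G_x)$ of norm at most $\|f\|_I$.

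Finally, I would verify multiplicativity and $*$-preservation on basis vectors. Expanding $\pi_x(f*g)\delta_\gamma$ using the convolution formula, and separately expanding $\pi_x(f)\pi_x(g)\delta_\gamma$ by applying $\pi_x(g)$ first, leads to the same double sum indexed by pairs $(\alpha,\beta)$ with $s(\beta)=r(\gamma)$ and $s(\alpha)=r(\beta)$, namely $\sum f(\alpha)g(\beta)\delta_{\alpha\beta\gamma}$. For the involution, the matrix coefficients satisfy $\langle \pi_x(f)\delta_\eta, \delta_\beta\rangle = f(\beta\eta^{-1})$, interpreted as zero if $\beta\eta^{-1}$ is undefined, and the analogous coefficient for $\pi_x(f^*)$ is its complex conjugate, giving $\pi_x(f^*)=\pi_x(f)^*$.

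I expect the main point requiring care, rather than a genuine obstacle, to be the bookkeeping in the Cauchy--Schwarz estimate and the change of variable. The argument is essentially the one for Hausdorff groupoids from \cite[Proposition~3.3.1]{Sims18}, and it transfers cleanly to this setting because every sum encountered is over a finite subset of $G_x$ or $G_{r(\gamma)}$; nowhere do we need to sum indicators over representations of $f$, so the non-Hausdorff character of $G$ plays no role.
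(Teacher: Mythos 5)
Your proposal is correct and takes essentially the same route as the paper, which gives no argument of its own beyond deferring to the proof of \cite[Proposition~3.3.1]{Sims18}; your key observation that the formula depends only on $f$ as a function (so well-definedness is free and finiteness follows since each bisection meets a fibre at most once) is exactly what makes that Hausdorff argument transfer. One small slip: in your displayed expression for $(\pi_x(f)\xi)(\beta)$ the index set should be $\{\alpha : r(\alpha)=r(\beta)\}$ rather than $\{\alpha : s(\alpha)=r(\beta)\}$ (as your own change of variables $\beta=\alpha\gamma$ shows), but this does not affect the Cauchy--Schwarz estimate.
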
    
\noindent It is useful to observe that the formula for $\pi_x$ is the same as the convolution product formula in $A(G)$.  
Let $\mathcal{H} = \bigoplus_{x \in \go}\ell^2(G_x).$
Then there is an injective $*$-homomorphism $\pi:A(G) \to B(\mathcal{H})$ such that for 
$(g_x)_{x\in \go} \in \mathcal{H}$ and $f \in A(G)$ we have 
\[\pi(f)(g_x)_{x\in\go} = (\pi_x(f)g_x)_{x\in \go} \quad
\text{and} \quad \|\pi(f)\| = \sup_{x\in\go}\|\pi_x(f)\|.\]
We call $\pi$ the \emph{left regular representation of $A(G)$}. 
Now the reduced norm  for $f\in A(G)$ is defined such that  $\|f\|_r =\|\pi(f)\|$ and $C^*_r(G)$ is identified with $\overline{\pi(A(G))}$ in $B(\mathcal{H})$.

\section{The full C*-algebra}
\label{sec:full}

To construct the full C*-algebra, we show that for $f \in A(G)$,  we can define a norm $\|f\|$ with the formula
  \[ \sup \{\|\pi(f)\| \mid  \pi:A(G) \to B(\mathscr{H}) \text{ is a *-homomorphism for some Hilbert space } \mathscr{H}\}.\] 
Thus we must show that the set
 \[
P_f:=\{\|\pi(f)\| \mid  \pi:A(G) \to B(\mathscr{H})\text{ is a *-homomorphism for some Hilbert space } \mathscr{H}\}
\] has an upper bound. 
 We do this in stages:
\begin{itemize}
\item We first show $P_f$ has an upper bound when  $\supp^o(f) \subseteq \go$.  
\item Then we show $P_f$ has an upper bound when $\supp^o(f) \subseteq B$ for some compact open bisection $B$.
\item Finally we show $P_f$ has an upper bound for any $f \in A(G)$.
\end{itemize}

\noindent\textbf{Functions supported on $\go$:}  
If $U \subseteq \go$ is a compact open subset, then $U$ is itself a groupoid that consists entirely of units and $A(U)$ is a complex Steinberg algebra.  
In the next proposition we show 
that when $\pi:A(U) \to B(\mathscr{H})$ is a $*$-homomorphism, then $\pi$ is automatically norm-decreasing.

\begin{remark}In Exel's construction in \cite{Exe08}, instead of $A(U)$, he works with $C(U) \subseteq \mathscr{C}(G)$, which is a C*-algebra.  So he can use the fact that 
any homomorphism between C*-algebras is automatically norm-decreasing, see, for example, \cite[Theorem~2.1.7]{Murphy}.
\end{remark} 

\begin{prop}\label{prop:normdec}
Let $G$ be an ample groupoid with Hausdorff unit space and let $U \subseteq \go$ be compact open.  Suppose 
\[\pi:A(U) \to B(\mathscr{H})\]
is a *-homomorphism for some Hilbert space $\mathscr{H}$.  Then $\pi$ is norm-decreasing with respect to the uniform norm on $A(U)$ and the
operator norm on $B(\mathscr{H})$.  
\end{prop}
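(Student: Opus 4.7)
The plan is to reduce every element of $A(U)$ to a linear combination of \emph{pairwise orthogonal} projections and then invoke the C*-identity in $B(\mathscr{H})$.  Concretely: first I will show that any $f\in A(U)$ can be written as $f=\sum_{i=1}^n a_i 1_{V_i}$ where the $V_i\subseteq U$ are pairwise disjoint compact open subsets; then I will use that each $\pi(1_{V_i})$ is a projection and that pairwise disjointness of the $V_i$ forces the $\pi(1_{V_i})$ to be mutually orthogonal; finally, for such a decomposition, the C*-identity gives $\|\pi(f)\|=\max_i|a_i|\|\pi(1_{V_i})\|\le\|f\|_\infty$.

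For the first step, start from any expression $f=\sum_{B\in H}c_B 1_B$ with $H$ a finite collection of compact open bisections contained in $U$ (which we may assume by Lemma~\ref{lem:supportsinC}, since $\supp^o(f)\subseteq U$).  Since $U\subseteq \go$ is Hausdorff, the union $\bigcup_{B\in H}B$ is Hausdorff and we can \emph{disjointify}: the atoms of the Boolean algebra generated by $H$ form a finite collection $\{V_i\}_{i=1}^n$ of pairwise disjoint compact open sets, and on each $V_i$ the function $f$ takes a constant value $a_i$, so $f=\sum_{i=1}^n a_i 1_{V_i}$.  This is the desired normal form.

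For the second step, for every compact open $V\subseteq U$ the generator $1_V$ satisfies $1_V^2=1_V=1_V^*$ in $A(U)$, so $\pi(1_V)$ is a (possibly zero) projection in $B(\mathscr{H})$ and hence $\|\pi(1_V)\|\le 1$.  If $V_i\cap V_j=\emptyset$ then $1_{V_i}1_{V_j}=0$, so $\pi(1_{V_i})\pi(1_{V_j})=0$; that is, the projections $\{\pi(1_{V_i})\}$ are mutually orthogonal.

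For the third step, orthogonality gives
\[
\pi(f)^*\pi(f)=\Bigl(\sum_i \overline{a_i}\,\pi(1_{V_i})\Bigr)\Bigl(\sum_j a_j\,\pi(1_{V_j})\Bigr)=\sum_i |a_i|^2\,\pi(1_{V_i}).
\]
The right-hand side is a positive operator whose spectrum is contained in $\{0\}\cup\{|a_i|^2:\pi(1_{V_i})\neq 0\}$, so its operator norm is at most $\max_i|a_i|^2$.  Applying the C*-identity in $B(\mathscr{H})$,
\[
\|\pi(f)\|^2=\|\pi(f)^*\pi(f)\|\le\max_i|a_i|^2=\|f\|_\infty^2,
\]
which is the conclusion.

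I do not anticipate a significant obstacle; the delicate point is the very first reduction to pairwise disjoint compact opens, which relies crucially on the Hausdorffness of $\go$ (hence of $U$) so that disjointification is legitimate, and on Lemma~\ref{lem:supportsinC} to ensure we may take all the bisections to lie inside $U$.  The rest is formal manipulation of orthogonal projections in a C*-algebra.
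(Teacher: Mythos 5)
Your proof is correct, and it takes a genuinely different route from the paper's. The paper argues via spectra: it observes that $A(U)$ is unital, that invertibility in $A(U)$ passes to invertibility in the C*-algebra $\overline{\pi(A(U))}$, hence $\sigma_{\overline{B}}(\pi(f))\subseteq\sigma_{A(U)}(f)=\sigma_{C(U)}(f)$, and then combines the C*-identity with the fact that the norm of a self-adjoint element of a C*-algebra equals its spectral radius. That is the standard ``unital *-homomorphisms shrink spectra'' argument transplanted from $C(U)$ to the (non-complete) algebra $A(U)$, and it would survive in settings where your normal form is unavailable. You instead exploit the special structure of $A(U)$ over a totally disconnected Hausdorff $U$: every element is a finite linear combination of characteristic functions of pairwise disjoint compact opens (the atoms of the Boolean algebra generated by the supports, which are compact open precisely because compact subsets of the Hausdorff space $U$ are closed), the images of these are mutually orthogonal projections, and then $\pi(f)^*\pi(f)=\sum_i|a_i|^2\pi(1_{V_i})$ has norm at most $\max_i|a_i|^2=\|f\|_\infty^2$. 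This is more elementary --- no spectral theory beyond the norm of a nonnegative combination of orthogonal projections --- and entirely self-contained; note only that your appeal to Lemma~\ref{lem:supportsinC} is unnecessary here, since the generators of $A(U)$ are by definition indicator functions of compact open subsets of $U$, and that the displayed equality $\|\pi(f)\|=\max_i|a_i|\|\pi(1_{V_i})\|$ in your opening plan should be (and in your actual computation is) only an inequality, since some $\pi(1_{V_i})$ may vanish.
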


\begin{proof}
Notice that $A(U)$ is unital with identity $1_U$.  Let $B =\pi(A(U))$.  Then $\pi(1_U)$ is an identity in $B$.  By continuity of multiplication in $B(\mathscr{H})$, $\pi(1_U)$ is an identity in $\overline{B}$. 

Since $\overline{B}$ is a C*-algebra, each element $b \in \overline{B}$ has a spectrum $\sigma_{\overline{B}}(b)$ and spectral radius $r_{\overline{B}}(b)$. 
Fix $f \in A(U)$.  Define the algebraic spectrum of $f$ to be
\[\sigma_{A(U)}(f):=\{\lambda \in \sigma_{C(U)}(f) : f-\lambda1_U \text{ is not invertible in }  A(U)\}\]
where $\sigma_{C(U)}(f)$ denotes the usual spectrum of $f$ in the C*-algebra $C(U)$ of continuous functions on $U$.  Similarly define the spectral radius
\[
r_{A(U)}(f):= \sup\{|\lambda| : \lambda \in \sigma_{A(U)}(f)\}.
\]
In fact  $\sigma_{A(U)}(f) = \sigma_{C(U)}(f)$ and $r_{A(U)}(f) = r_{C(U)}(f)$, we are just being careful to observe which algebra we are working in.

 We claim that $r_{\overline{B}}(\pi(f)) \leq r_{A(U)}(f)$.  
Suppose $\lambda \notin \sigma_{A(U)}(f)$.  Then $(f-\lambda 1_U)^{-1} \in A(U)$.  
Since $\pi$ is a unital homomorphism from $A(U)$ into $\overline{B}$,  
$\pi((f-\lambda 1_U)^{-1}) \in \overline{B}$ is an inverse for $\pi(f-\lambda 1_U)$ and so by linearity,
$\lambda \notin \sigma_{\overline{B}}(\pi(f))$.  Thus
$\sigma_{\overline{B}}(\pi(f)) \subseteq \sigma_{A(U)}(f)$ and the claim follows.
Now because $\overline B$ is a C*-algebra we use \cite[Theorem~2.1.1]{Murphy} to compute
$$\|\pi(f)\|^2 = \|\pi(f)\|^2_{\overline{B}} =\|\pi(f^*f)\|_{\overline{B}}=r_{\overline{B}}(\pi(f^*f)), $$ 
further, we have $$ r_{A(U)}(f^*f) =r_{C(U)}(f^*f)=\|f\|_{\infty}^2,$$ 
and by our claim and the above calculations, we observe that $\|\pi(f)\|^2 \leq \|f\|^2_{\infty}$ as required.
\end{proof}

For compact open $U \subseteq \go$, we identify $A(U)$ with its isomorphic image in $A(G)$ under the inclusion map 
$i:A(U) \to A(G)$ where
\[  i(f)(\gamma) = \begin{cases}f(\gamma) & \text{if } \gamma \in U\\  0 & \text{otherwise}. \end{cases}\]
The next lemma shows that our intuition about ``functions supported on the unit space'' is accurate. 

\begin{lemma}
\label{lem:gap}
Let $G$ be an ample groupoid such that $\go$ is Hausdorff.   
If $f\in A(G)$ with $\supp^o(f) \subseteq \go$, then there is a compact open subset $U \subseteq \go$ such that
$\supp^o(f) \subseteq U$.
\end{lemma}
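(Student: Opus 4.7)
The plan is to exploit the decomposition of $f$ as a finite linear combination of indicator functions of compact open bisections and build $U$ directly by pushing those bisections down to the unit space via the range map. The hypothesis that $\go$ is Hausdorff is used only to guarantee that compact open bisections map to compact open subsets of $\go$.

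More precisely, write $f = \sum_{B \in H} a_B 1_B$ where $H$ is a finite collection of compact open bisections. Set
\[
U := \bigcup_{B \in H} r(B).
\]
Each $r(B)$ is an open subset of $\go$ because $r\restr{B}$ is a homeomorphism onto an open subset of $\go$, and $r(B)$ is compact as the continuous image of the compact set $B$. So $U$ is a finite union of compact open subsets of $\go$, hence is itself a compact open subset of $\go$.

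It remains to check $\supp^o(f) \subseteq U$. If $\gamma \in \supp^o(f)$, then $f(\gamma) = \sum_{B \in H} a_B 1_B(\gamma) \neq 0$, so $\gamma \in B$ for at least one $B \in H$. By hypothesis $\gamma \in \go$, so $\gamma = r(\gamma) \in r(B) \subseteq U$.

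There is no serious obstacle here: the only mild subtlety is keeping track of which compact/open statements require Hausdorffness of $\go$. In particular, once we know each $r(B)$ is compact open in the Hausdorff space $\go$, the finite union $U$ is automatically compact open in $\go$, and openness of $\go$ in $G$ then gives that $U$ is open in $G$ as well. Note that we do \emph{not} claim $\supp^o(f)$ itself is open or compact (indeed, Example~\ref{ex:2snake} shows these can fail); we only enclose it in such a set.
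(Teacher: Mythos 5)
Your proof is correct and follows essentially the same route as the paper's: both enclose $\supp^o(f)$ in the union of the compact open bisections appearing in a decomposition of $f$ and then push down to $\go$ via a local homeomorphism (the paper uses $s$ applied to the whole union, you use $r$ applied to each bisection, which gives the same kind of set). The key observation in both arguments is identical, namely that $\supp^o(f) \subseteq \go$ forces each point of the support to be fixed by the range/source map, so it lands in the compact open image.
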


\begin{proof}
Write $f = \sum_{B \in F} a_B1_B$ where $F$ is a finite collection of compact open bisections.  Then 
\[
\supp^o(f) \subseteq K := \bigcup_{B \in F} B.
\]
Notice that $K$ is compact open.  
We have $\supp^o(f) \subseteq \go$ by assumption, so $s(\supp^o(f)) = \supp^o(f)$.
Now 
\[\supp^o(f) \subseteq s(K)\]
which is compact open because $s$ is a local homeomorphism. So $U = s(K)$ suffices.  
\end{proof}

We can now verify that $P_f$ is bounded by $\|f\|_{\infty}$ for and function $f$
 supported on $\go$.

 \begin{lemma}
 \label{lem:supuni} Let $\pi:A(G) \to B(\mathscr{H})$ for some Hilbert space $\mathcal{H}$.
 If $f\in A(G)$ with $\supp^o(f) \subseteq \go$,
 then $\|\pi(f)\| \leq \|f\|_{\infty}$.
 \end{lemma}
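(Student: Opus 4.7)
The plan is to reduce the situation to Proposition~\ref{prop:normdec}, which only applies to $*$-homomorphisms out of $A(U)$ for compact open $U \subseteq \go$. So the first step is to produce such a $U$ containing the support of $f$, which is exactly what Lemma~\ref{lem:gap} provides: there is a compact open $U \subseteq \go$ with $\supp^o(f) \subseteq U$.

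Next I would realise $f$ as an element of $A(U)$ sitting inside $A(G)$ via the inclusion $i: A(U) \to A(G)$. Writing $f = \sum_{B \in H} a_B 1_B$ for some finite collection of compact open bisections $H$, I would invoke Lemma~\ref{lem:supportsinC} with $C = U$ to rewrite $f = \sum_{D \in F} a_D 1_D$ where every $D \in F$ satisfies $D \subseteq U \subseteq \go$. Since each such $D$ is a compact open subset of the Hausdorff space $\go$ contained in $U$, this expression lies in the image of $i$, so $f = i(g)$ for some $g \in A(U)$ with $\|g\|_\infty = \|f\|_\infty$.

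Now the composition $\pi \circ i : A(U) \to B(\mathcal{H})$ is a $*$-homomorphism, so by Proposition~\ref{prop:normdec} it is norm-decreasing from the uniform norm on $A(U)$ to the operator norm. Hence
\[
\|\pi(f)\| = \|(\pi \circ i)(g)\| \leq \|g\|_\infty = \|f\|_\infty,
\]
which is what we wanted.

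The only step with any substance is verifying that $f$ genuinely lies in $i(A(U))$ — a priori an expansion $f = \sum a_B 1_B$ supplied by the definition of $A(G)$ could use bisections $B$ that spill outside $U$, with cancellations making $f$ vanish off $U$. This is precisely the content of Lemma~\ref{lem:supportsinC} in the non-Hausdorff setting, so once that lemma is in hand the argument is immediate; no further obstacle arises.
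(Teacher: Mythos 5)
Your proposal is correct and follows essentially the same route as the paper: Lemma~\ref{lem:gap} to obtain $U$, then Proposition~\ref{prop:normdec} applied to the restriction of $\pi$ to $A(U)$. The only difference is that you explicitly justify, via Lemma~\ref{lem:supportsinC}, that $f$ really lies in the copy of $A(U)$ inside $A(G)$ — a detail the paper's proof leaves implicit — which is a welcome bit of care but not a change of method.
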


\begin{proof}
We have $\supp^o(f) \subseteq U$ for some compact open $U \subseteq \go$  by Lemma~\eqref{lem:gap}.  
Observe that $\pi|_{A(U)}$ is a *-homomorphism from $A(U)$ to $B(\mathscr{H})$.
Then $\|\pi(f)\| = \|\pi|_{A(U)}(f)\| \leq \|f\|_{\infty}$ by Proposition~\ref{prop:normdec}.
\end{proof}

 \noindent\textbf{Functions supported on a bisection:}   If $f \in A(G)$ with $\supp^o(f) \subseteq B$ for some compact open bisection $B$, then once again we get that $P_f$ has upper bound $\|f\|_{\infty}$.

 \begin{lemma}
 \label{lem:supbis} Let $\pi:A(G) \to B(\mathscr{H})$ for some Hilbert space $\mathcal{H}$.
 If $f\in A(G)$ with $\supp^o(f) \subseteq B$ for some compact open bisection $B$,
 then $\|\pi(f)\| \leq \|f\|_{\infty}$.
 \end{lemma}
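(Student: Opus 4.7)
The plan is to reduce to the already-handled case of functions supported on $\go$ (Lemma~\ref{lem:supuni}) by using the $C^*$-identity in $B(\mathscr{H})$. The key observation is that although $f$ itself is supported on a bisection $B$, the product $f^*f$ is automatically supported on the unit space, because $B$ is a bisection forces $B^{-1}B \subseteq s(B) \subseteq \go$.

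In more detail, I would first show that $\supp^o(f^*f) \subseteq \go$. From the convolution formula, $(f^*f)(\gamma) = \sum_{\alpha\beta=\gamma} \overline{f(\alpha^{-1})} f(\beta)$, and for a term to be nonzero we need $\alpha^{-1} \in B$ and $\beta \in B$, so that $\gamma = \alpha\beta \in B^{-1}B \subseteq \go$. Then Lemma~\ref{lem:supuni} applies to $f^*f$, giving
\[
\|\pi(f)\|^2 = \|\pi(f)^*\pi(f)\| = \|\pi(f^*f)\| \leq \|f^*f\|_\infty.
\]

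The remaining step is to verify $\|f^*f\|_\infty = \|f\|_\infty^2$. Since $B$ is a bisection, for each $x \in s(B)$ there is a unique $\beta_x \in B$ with $s(\beta_x)=x$, and the sum defining $(f^*f)(x)$ collapses to the single term $\overline{f(\beta_x)}f(\beta_x) = |f(\beta_x)|^2$. As $s$ restricts to a bijection $B \to s(B)$, taking the supremum over $x$ gives $\|f\|_\infty^2$. Combining with the inequality above yields $\|\pi(f)\| \leq \|f\|_\infty$.

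I do not expect a serious obstacle here: the argument is a standard $C^*$-identity trick, and the only technical point is being careful that the set-theoretic bisection identity $B^{-1}B \subseteq \go$ forces $\supp^o(f^*f) \subseteq \go$ without any Hausdorff assumption on $G$. Since we are only arguing pointwise about where the convolution can fail to vanish, no closure or compactness issues (of the kind flagged in Example~\ref{ex:2snake}) intervene.
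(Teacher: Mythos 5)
Your proof is correct and follows essentially the same route as the paper: reduce to Lemma~\ref{lem:supuni} via the $C^*$-identity applied to $f^*f$, whose support lies in $\go$ because $B$ is a bisection. The paper simply cites \cite[Proposition~3.12]{Exe08} for the support claim and asserts $\|f^*f\|_\infty = \|f\|_\infty^2$ without comment, whereas you verify both directly; your verifications are accurate.
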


\begin{proof}
Fix $f \in A(G)$ such that $\supp^o(f)$ is contained in a compact open bisection.  Then $\supp^o(f^*f) \subseteq \go$ by \cite[Proposition~3.12]{Exe08}.   Now Lemma~\ref{lem:supuni} gives the result:
\[
\|\pi(f)\|^2 =\|\pi(f^*f)\| \leq \|f^*f\|_{\infty} = \|f\|^2_{\infty}.\]
\end{proof}

 \noindent\textbf{An arbitrary element of $A(G)$:} For an arbitrary $f \in A(G)$,  $\|f\|_{\infty}$ is no longer an upper bound for $P_f$;  however, since $f$ is a sum of things that are each bounded, we can still find a bound for $P_f$.

  \begin{lemma}\label{lem:Pfisbounded}
 \label{lem:anyf} Let $\pi:A(G) \to B(\mathscr{H})$ for some Hilbert space $\mathcal{H}$.
 If $f\in A(G)$ then there exists $M_f \geq0$ such that $\|\pi(f)\| \leq M_f$.
 \end{lemma}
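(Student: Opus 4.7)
The plan is to reduce to Lemma~\ref{lem:supbis} by decomposing $f$ into a finite linear combination of indicator functions of compact open bisections. Since $A(G)$ is by definition the span of such indicator functions, any $f \in A(G)$ can be written as
\[
f = \sum_{B \in F} a_B 1_B
\]
where $F$ is a finite collection of compact open bisections and $a_B \in \mathbb{C}$.

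The key observation is that for each $B \in F$, we have $\supp^o(1_B) \subseteq B$ and $\|1_B\|_\infty = 1$, so Lemma~\ref{lem:supbis} immediately yields $\|\pi(1_B)\| \leq 1$. Using that $\pi$ is linear together with the triangle inequality in $B(\mathscr{H})$,
\[
\|\pi(f)\| = \Bigl\|\sum_{B \in F} a_B \pi(1_B)\Bigr\| \leq \sum_{B \in F} |a_B| \, \|\pi(1_B)\| \leq \sum_{B \in F} |a_B|.
\]
So I would simply set $M_f := \sum_{B \in F} |a_B|$ and the conclusion follows.

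There is essentially no obstacle here; the hard technical work has already been absorbed into Proposition~\ref{prop:normdec} and Lemmas~\ref{lem:supuni} and \ref{lem:supbis}. One small cosmetic point worth noting is that the bound $M_f$ depends a priori on the chosen representation of $f$ as a finite sum, not intrinsically on $f$, but this is harmless since we only need the existence of \emph{some} upper bound for $P_f$.
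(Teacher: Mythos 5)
Your proof is correct and follows essentially the same route as the paper's: decompose $f$ as a finite linear combination of indicator functions of compact open bisections, apply Lemma~\ref{lem:supbis} to each term, and sum up via the triangle inequality. Your closing remark that $M_f$ depends on the chosen decomposition rather than intrinsically on $f$ is accurate and, as you note, harmless for the purpose of bounding $P_f$.
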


\begin{proof}
Write $f = \sum_{D \in F} a_D1_D$ where each $D \in F$ is a compact open bisection and each $a_D \in \C$.  
Now we use that $\pi$ is a homomorphism and apply Lemma~\ref{lem:supbis} to get 
\[\|\pi(f)\| = \|\pi( \sum_{D \in F} a_D1_D)\| \leq  \sum_{D \in F} \|\pi(a_D1_D)\| \leq  \sum_{D \in F} |a_D|.\]
So $M_f = \sum_{D \in F} |a_D|$ suffices. 
\end{proof}

\begin{thm}
\label{thm:fullnorm}
Let $G$ be an ample groupoid with Hausdorff unit space.
For $f\in A(G)$, the formula  
\[\|f\| = \sup \{\|\pi(f)\| \mid \pi:A(G) \to B(\mathscr{H})\text{ is a *-homomorphism for some Hilbert space } \mathscr{H}\}\] is a C*-norm on $A(G)$. 
\end{thm}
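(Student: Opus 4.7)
The plan is to verify the norm axioms together with the C*-identity, leaning on Lemma~\ref{lem:Pfisbounded} for well-definedness and on the C*-structure of $B(\mathscr{H})$ for everything else. By Lemma~\ref{lem:Pfisbounded}, for each $f \in A(G)$ the set $P_f$ is bounded above, so $\|f\| := \sup P_f$ is a well-defined non-negative real number. Absolute homogeneity $\|\lambda f\| = |\lambda|\|f\|$ and the triangle inequality $\|f+g\| \leq \|f\| + \|g\|$ follow at once by passing to the supremum in the corresponding inequalities for the operator norm on $B(\mathscr{H})$.

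For the $*$-algebraic compatibilities: submultiplicativity $\|fg\| \leq \|f\|\|g\|$ follows because for every $*$-homomorphism $\pi$ we have $\|\pi(fg)\| = \|\pi(f)\pi(g)\| \leq \|\pi(f)\|\|\pi(g)\|$, and then taking suprema; and $\|f^*\| = \|f\|$ holds because $\|\pi(f^*)\| = \|\pi(f)^*\| = \|\pi(f)\|$ for every such $\pi$. For the C*-identity, one direction is $\|f^*f\| \leq \|f^*\|\|f\| = \|f\|^2$ by submultiplicativity together with the previous bullet. For the reverse, apply the C*-identity inside each $B(\mathscr{H})$: for every $\pi$,
\[
\|\pi(f)\|^2 = \|\pi(f)^*\pi(f)\| = \|\pi(f^*f)\| \leq \|f^*f\|,
\]
and taking the supremum over $\pi$ gives $\|f\|^2 \leq \|f^*f\|$.

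The one point requiring more than formal manipulation is positive definiteness: $\|f\|=0 \Rightarrow f=0$. For this it suffices to exhibit a single injective $*$-homomorphism of $A(G)$ into some $B(\mathscr{H})$, because then $\|f\| \geq \|\pi(f)\| > 0$ whenever $f \neq 0$. The left regular representation $\pi : A(G) \to B\bigl(\bigoplus_{x \in \go} \ell^2(G_x)\bigr)$ from Section~\ref{sec:reduced} does the job. Indeed, if $f(\gamma_0) \neq 0$ and $x = s(\gamma_0)$, then $x \in G_x$ and
\[
\pi_x(f)\delta_x = \sum_{\alpha \in G_x} f(\alpha)\,\delta_\alpha
\]
is a finite sum in $\ell^2(G_x)$ (finite because each compact open bisection meets $G_x$ in at most one point, and $f$ is a finite linear combination of such indicators) whose $\delta_{\gamma_0}$-coefficient is $f(\gamma_0) \neq 0$. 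Hence $\pi(f) \neq 0$, so $\pi$ is injective and $\|\cdot\|$ separates points.

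The main obstacle is really packaged into this last step: all of the other clauses follow mechanically from sup manipulations combined with the C*-identity inside $B(\mathscr{H})$, but positive definiteness depends on knowing that at least one faithful Hilbert-space representation of $A(G)$ exists, and the concrete construction from the reduced side supplies exactly what is needed.
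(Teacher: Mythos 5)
Your proof is correct and follows essentially the same route as the paper's: Lemma~\ref{lem:Pfisbounded} for finiteness of the supremum, formal sup manipulations for the seminorm axioms and the C*-identity, and injectivity of the left regular representation for definiteness. The only difference is that you spell out details (notably the verification that $\pi_x(f)\delta_x$ detects $f(\gamma_0)\neq 0$) which the paper leaves as ``straightforward to check,'' and your details check out.
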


\begin{proof}
That the supremum exists follows from Lemma~\ref{lem:Pfisbounded}.
It is straightforward to check that this is a C*-seminorm by noting that the maps $f \mapsto \|\pi(f)\|$ are all C*-norms in their own right, and by noting that properties such as the triangle inequality, submultiplicativity, and the C*-identity are all preserved after taking the supremum.  If $f$ is nonzero, then $\|f\|$ is nonzero since the left regular representation is injective.  
\end{proof}

\begin{definition}
\label{def:main}
Let $G$ be an ample groupoid with Hausdorff unit space.
Define $C^*(G)$ to be the completion of $A(G)$ in the norm $\| \cdot \|$ from Theorem~\ref{thm:fullnorm}.
\end{definition}


\section{$*$-homomorphisms are automatically bounded}
\label{sec:autobound}

\noindent{\textbf{The full C*-algebra}:}  In order to show our full C*-algebra is the same as the standard one, we establish Proposition~\ref{prop:autobound}.  We could have used this proposition to avoid all of the work above to get a bound on $\|\pi(f)\|$ for any $*$-homomorphism $\pi:A(G) \to B(\mathscr{H})$.  However, the above construction is much less technical and gives a simplified, self-contained construction of $C^*(G)$ which was one of our goals.  

Given a topological space $U$, recall that $C_c(U)$ is the set of continuous functions from $U$ to $\mathbb{C}$ that are zero outside of some compact set in $U$.
So if $U$ is a Hausdorff subspace of some bigger space $X$ and $f \in C_c(U)$, then
$\overline{\supp^o(f)}^U$, where  closure is with respect to the subspace topology on $U$,  is compact.    
Recall that
\[\mathscr{C}(G) = \operatorname{span} \{f  \in C_c(U) \mid U \subseteq G \text{ is open and Hausdorff}\}.\]    
 We view each $C_c(U)$ as sitting inside the vector space of all functions from $G$ to $\mathbb{C}$ by defining them to be zero outside of $U$ and hence $A(G) \subseteq \mathscr{C}(G)$.  

\begin{definition}
\label{def:ILT}
Following \cite{MW08}: \begin{itemize}
\item We say a net of functions $\{f_n\}$ \emph{converges to $f$ in the inductive limit topology} if $f_n \to f$ uniformly and $\supp^o(f_n) \subseteq K$ eventually for some compact $K$.  Like Muhly and Williams,  we are not claiming there is a topology where these are the only convergent sequences.    
\item We will also say a map  with domain $A(G)$ (or $\mathscr{C}(G)$) is continuous with respect to the inductive limit topology if it takes convergent nets in the inductive limit topology to convergent nets in the codomain. 
\end{itemize}
\end{definition}

\begin{prop}
\label{prop:autobound}
Let $G$ be an ample groupoid with Hausdorff unit space.
Suppose $\pi:A(G) \to B(\mathscr{H})$ is a *-homomorphism for some Hilbert space $\mathscr{H}$.  Then $\pi$ is continuous with respect to the inductive limit topology on $A(G)$.
\end{prop}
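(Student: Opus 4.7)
The plan is to reduce inductive limit continuity to a quantitative bound of the form $\|\pi(f)\| \leq M \|f\|_\infty$ for all $f \in A(G)$ whose open support is contained in a fixed compact open $C \subseteq G$. Such a bound gives ILT continuity at once: if $f_n \to f$ in the ILT then, by linearity, I reduce to $f = 0$, so $\|f_n\|_\infty \to 0$ with $\supp^o(f_n) \subseteq K$ compact eventually. Using compactness of $K$ together with the basis of compact open bisections in the ample groupoid $G$, enlarge $K$ to a compact open $C = \bigcup_{i=1}^m B_i$ with each $B_i$ a compact open bisection, and conclude $\|\pi(f_n)\| \leq M \|f_n\|_\infty \to 0$.

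To establish the bound on the subspace $\{f \in A(G) : \supp^o(f) \subseteq C\}$, I would split any such $f$ as $f = \sum_{i=1}^m f_i$ with $\supp^o(f_i) \subseteq B_i$ and with uniform control $\sum_i \|f_i\|_\infty \leq M \|f\|_\infty$ (for $M$ depending only on the cover), and then invoke Lemma~\ref{lem:supbis} on each summand. The main structural input is Lemma~\ref{lem:supportsinC}, which lets me write $f = \sum_{D \in F} a_D 1_D$ with each $D$ a compact open bisection inside $C$. Each such $D$ is Hausdorff (because $\go$ is) and totally disconnected (via the homeomorphism $r|_D : D \to r(D) \subseteq \go$), hence a Stone space; so the finite open cover $\{D \cap B_i\}_{i=1}^m$ of $D$ can be refined to a disjoint clopen partition $D = \bigsqcup_j D_j^D$ with each $D_j^D \subseteq B_{i(D,j)}$. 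Summing over $j$ and regrouping by $B_i$ produces the required pieces $f_i$.

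The main obstacle will be producing the splitting with the uniform constant $M$. In the Hausdorff case this is routine, because the canonical level-set decomposition of $f$ has coefficients equal to the values of $f$ (hence of modulus at most $\|f\|_\infty$), and its number of pieces is controlled by the Boolean algebra generated by $\{B_i\}$. In the non-Hausdorff setting, however, level sets of $f$ need not be open in $G$, and the naive pointwise products $f \cdot 1_{B_i}$ typically fail to lie in $A(G)$. The substitute is to work entirely inside the Hausdorff space $\go$: partition $s(C)$ and $r(C)$ into their (compact open) Boolean atoms, use the source/range projections $1_{S_l}$ and $1_{R_k}$ (which satisfy $\|\pi(1_S)\| \leq 1$ by Lemma~\ref{lem:supuni}) to sandwich $f$ via the identity $f = \sum_{k,l} 1_{R_k} * f * 1_{S_l}$, and exploit that each fibre of $C$ over a unit meets at most $m$ of the $B_i$. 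I expect this non-Hausdorff bookkeeping to be the delicate step, mirroring in spirit the care already required for Lemma~\ref{lem:supportsinC}.
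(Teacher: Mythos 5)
Your overall skeleton is the same as the paper's: cover the eventual compact support $K$ by finitely many compact open bisections $B_1,\dots,B_m$, split $f_n-f$ into summands supported on the individual $B_i$ with total sup-norm controlled by a constant times $\|f_n-f\|_\infty$, and finish with Lemma~\ref{lem:supbis}. The paper packages exactly this splitting as Lemma~\ref{lem:boundedsummands}, and that lemma is the entire technical content of the proof. You have correctly located the difficulty — the naive regrouped decomposition $f=\sum_i f_i^0$ obtained by disjointifying the covers $\{D\cap B_i\}$ of each $D$ and collecting terms has summands whose values are partial sums of the coefficients $a_D$, and these partial sums are not bounded by $\|f\|_\infty$ because the grouping separates terms that cancel in $f$. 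But your proposed substitute does not close this gap.

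Concretely: sandwiching by unit-space projections, $f=\sum_{k,l}1_{R_k}*f*1_{S_l}$, does not localize the support to a single bisection, because distinct arrows can share both source and range — in the two-headed snake, $0$ and $\gamma_1$ are not separated by any choice of $R_k$, $S_l$, so a piece $1_{R_k}*f*1_{S_l}$ is still a sum over several overlapping bisections and you are back where you started. The observation that each fibre of $C$ over a unit meets at most $m$ of the $B_i$ bounds the $I$-norm of each sandwiched piece, but you cannot use that: $\pi$ is not known to be $I$-norm bounded (that is essentially what is being proved, via the disintegration theorem in Corollary~\ref{cor:autobound}), and Lemmas~\ref{lem:supuni} and~\ref{lem:supbis} only control $\|\pi(\cdot)\|$ for functions already supported on $\go$ or on a single bisection. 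What the paper actually does in Lemma~\ref{lem:boundedsummands} is an iterative redistribution: starting from the $f_i^0$, on the compact set where $|f_i^0|$ exceeds $\|f\|_\infty+\epsilon$ one uses the identity $f(\gamma)=f_i^0(\gamma)+\sum_{j\in S_\gamma}f_j^0(\gamma)$ on small compact open neighbourhoods (where the $f_j^0$ are locally constant) to transfer the cancelling contributions of the other summands onto $f_i$, shrinking no other summand's sup-norm, and repeats until every summand is bounded by $\|f\|_\infty+\epsilon$. Some argument of this kind — not the source/range sandwich — is needed to make your step two work, so as written the proposal has a genuine gap at its central step.
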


Before presenting the proof,  we need to make sure functions in $A(G)$ can be written in a somewhat efficient way.  When $G$ is Hausdorff, this is easy because functions in $A(G)$ are locally constant and have finite range.  

\begin{lemma}
\label{lem:rest}
Let $B,D$ be compact open bisections with $B \subseteq D$.  If $f \in A(G)$ with $\supp^o(f) \subseteq D$, 
then $f|_{B} \in A(G)$.
\end{lemma}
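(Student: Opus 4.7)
The plan is to use Lemma~\ref{lem:supportsinC} to write $f$ in terms of compact open bisections that all sit inside $D$, and then intersect each piece with $B$. Since $D$ is a compact open bisection and $\go$ is Hausdorff, the restriction $r|_D : D \to r(D) \subseteq \go$ is a homeomorphism onto a Hausdorff subspace, so $D$ is Hausdorff. This is the key point that lets us replace the general non-Hausdorff situation with a tame Hausdorff one.

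Write $f = \sum_{C \in F} a_C 1_C$ where $F$ is a finite collection of compact open bisections, each contained in $D$, by Lemma~\ref{lem:supportsinC}. For each $C \in F$, consider $B \cap C$. Since $B$ and $C$ are both compact subsets of the Hausdorff space $D$, they are closed in $D$, so $B \cap C$ is closed in $C$ and hence compact. Since $B$ and $C$ are open in $G$, so is $B \cap C$. Finally $B \cap C$ sits inside the bisection $B$, so $r$ and $s$ restricted to it are injective, making $B \cap C$ a compact open bisection. Therefore $1_{B \cap C} \in A(G)$ for every $C \in F$.

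To finish, I would verify the pointwise identity
\[ f|_B = \sum_{C \in F} a_C 1_{B \cap C}. \]
For $\gamma \notin B$ both sides vanish. For $\gamma \in B$, the left side equals $f(\gamma) = \sum_{\{C \in F : \gamma \in C\}} a_C$, while the right side collects exactly the same terms since $\gamma \in B \cap C$ iff $\gamma \in C$ (as $\gamma \in B$ already). Thus $f|_B$ is a finite complex linear combination of characteristic functions of compact open bisections, so $f|_B \in A(G)$.

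There is no real obstacle once the Hausdorffness of $D$ and Lemma~\ref{lem:supportsinC} are brought in together: the former guarantees intersections with $B$ stay compact, and the latter guarantees we only need to intersect with bisections sitting in the Hausdorff region $D$. Without the preparation step via Lemma~\ref{lem:supportsinC}, arbitrary compact open bisections representing $f$ could stray outside $D$ and their intersections with $B$ need not be compact in $G$.
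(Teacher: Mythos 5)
Your proposal is correct and follows essentially the same route as the paper's proof: apply Lemma~\ref{lem:supportsinC} to rewrite $f$ using compact open bisections inside $D$, use the Hausdorffness of the bisection $D$ to see that each $B \cap C$ is a compact open bisection, and conclude via the restriction identity $f|_B = \sum_{C \in F} a_C 1_{B\cap C}$. The extra detail you supply (why $B\cap C$ is compact, and the pointwise verification) is just an expansion of what the paper leaves implicit.
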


\begin{proof}
Using  Lemma~\ref{lem:supportsinC}, write $f = \sum_{C \in F} a_C 1_C$ where each $C \in F$ is a compact open bisection in $D$.    Since $D$ is an open bisection, it is Hausdorff thus for each $C \in F$, $B \cap C$ is also a compact open bisection.  Now 
\[  f|_{B}  = ( \sum_{C \in F} a_C 1_C)|_{B} =  \sum_{C \in F} a_C 1_{C\cap B} \in A(G).\]
\end{proof}

\begin{lemma}
\label{lem:boundedsummands}
Let $K\subseteq G$ be compact and $f \in A(G)$ with $\supp^o(f) \subseteq K$.  Suppose  $B_1, ..., B_k$ are compact open bisections that cover $K$. 
Suppose $\epsilon >0$. Then for  each $i, 1 \leq i \leq k$, there exists $f_{i}\in A(G)$ with  $\supp^o(f_{i}) \subseteq B_i$ such that 
\begin{enumerate}
\item \label{it1:boundedsummand}we have $\|f_{i}\|_{\infty}\leq  \|f\|_{\infty}+\epsilon$ and 
\item\label{it2:boundedsummand}$f = \sum_{i=1}^k f_{i}$.
\end{enumerate}
\end{lemma}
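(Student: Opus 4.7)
The plan is a two-step reduction. First, refine the given cover of $K$ by compact open bisections finer than $\{B_i\}$; then use Lemma~\ref{lem:supportsinC} to rewrite $f$ so that its bisection expansion lives inside this refinement; finally, partition each expansion bisection inside its own Hausdorff structure and group the resulting pieces by which $B_i$ they lie in.

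To begin, ampleness and compactness of $K$ let me pick compact open bisections $V_1,\ldots,V_m$ covering $K$ with each $V_j\subseteq B_{i(j)}$. Applying Lemma~\ref{lem:supportsinC} with the compact open set $C=\bigcup_j V_j$ gives an expansion $f=\sum_{D\in F}a_D\,1_D$ where every $D\in F$ is a compact open bisection contained in $\bigcup_j V_j$. For each such $D$, which is compact Hausdorff (as a bisection over the Hausdorff $\go$), the open cover $\{V_j\cap D\}$ is refined, using ampleness, to compact open bisections $U_\gamma\subseteq V_{j(\gamma)}\cap D$ for each $\gamma\in D$; a finite subcover together with the disjointification procedure from the preliminaries then yields a disjoint decomposition $D=\bigsqcup_\ell W^D_\ell$ where each $W^D_\ell$ is a compact open bisection in $G$ contained in some $V_{j^D_\ell}\subseteq B_{i^D_\ell}$. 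Setting
\[
 f_i := \sum_{D\in F}\sum_{\ell:\,i^D_\ell=i} a_D\,1_{W^D_\ell},
\]
gives $f_i\in A(G)$ with $\supp^o(f_i)\subseteq B_i$, and $\sum_i f_i = \sum_D a_D\sum_\ell 1_{W^D_\ell} = \sum_D a_D\,1_D = f$, establishing condition (2).

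The main obstacle is the norm bound (1). At a point $\gamma\in G$, the value $f_i(\gamma)$ is a partial sum of $\{a_D:\gamma\in D\}$ indexed by the $D$'s whose $\gamma$-piece was assigned to $B_i$. If the piece-assignments were consistent across the various $D$'s containing $\gamma$ (the same choice of $B_i$ for each such $D$), then $f_i(\gamma)$ would equal either $f(\gamma)$ or $0$, and $\|f_i\|_\infty\le\|f\|_\infty$ would follow at once. Such consistency is automatic in the Hausdorff case, but in the non-Hausdorff case it can fail, because a point may lie in the closure of $D$ without lying in $D$ itself, forcing the refining $U_\gamma$'s to overlap neighbouring bisections in uncontrolled ways.

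The $\epsilon$-slack is where this is absorbed. The key idea is to choose each $U_\gamma$ to lie not only in $V_{j(\gamma)}$ but also in $\bigcap_{D\in F:\,\gamma\in D}D$ (an open set containing $\gamma$, so that such a $U_\gamma$ exists by ampleness), and to fix a common enumeration of the breakpoints used in the disjointifications of the different $D$'s. With these uniform choices, the set of points of $G$ at which the piece-assignment is inconsistent across $D$'s is forced to be small, and the contribution of these inconsistent points to each $\|f_i\|_\infty$ can be kept below $\epsilon$. Making this control quantitative is the crux of the argument.
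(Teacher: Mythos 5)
Your construction of the candidate decomposition is sound and in fact coincides with the first half of the paper's proof: refine the cover, use Lemma~\ref{lem:supportsinC} to push the expansion of $f$ inside the refinement, disjointify within each (Hausdorff) expansion bisection $D$, and regroup by target $B_i$ to get functions $f_i^0$ satisfying condition~(2). You have also correctly diagnosed why condition~(1) can fail: the piece-assignments made separately inside the various $D$'s containing a point need not agree.

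The gap is that you never actually prove the norm bound; you end by declaring that ``making this control quantitative is the crux,'' which is precisely the content of the lemma. Moreover, the mechanism you propose cannot work as stated. First, ``the set of inconsistent points is small'' is the wrong kind of control for a supremum norm: a single point $\alpha$ at which $f_i(\alpha)$ is a bad partial sum of the $a_D$'s already violates $\|f_i\|_\infty\le\|f\|_\infty+\epsilon$, no matter how small the inconsistency set is. Second, the hoped-for global consistency of assignments is genuinely unattainable in the non-Hausdorff setting: in the two-headed snake of Example~\ref{ex:2snake}, with $B_1=\go$ and $B_2=(\go\setminus\{0\})\cup\{\gamma_1\}$, the groupoid cannot be partitioned into compact open sets contained in $B_1$ and $B_2$ respectively (any open set containing $\gamma_1$ meets every neighbourhood of $0$ in $\go$), so no choice of $U_\gamma$'s or ``common breakpoints'' can make the assignments agree everywhere. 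What the paper does instead is an explicit a~posteriori correction, and this is the step your proposal is missing: for an index $i$ with $\|f_i^0\|_\infty>\|f\|_\infty+\epsilon$, the compact set $B_i\setminus O$ where $|f_i^0|\ge\|f\|_\infty+\epsilon$ is covered by finitely many disjoint compact open bisections $C_{\gamma_p,S_p}$ on which the relevant summands $f_j^0$ ($j\in S_p$) are constant and satisfy $f_i^0+\sum_{j\in S_p}f_j^0=f(\gamma_p)$; one then transfers the restrictions $(f_j^0)|_{C_{\gamma_p,S_p}}$ from the $f_j^0$ onto $f_i^0$, which forces $f_i$ to take the value $f(\gamma_p)$ on the bad set while preserving the sum and not increasing $\|f_j\|_\infty$ for $j\ne i$; iterating over the finitely many indices finishes the proof. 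Without some such correction argument, your proposal does not establish condition~(1).
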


\begin{proof}
Fix $\epsilon > 0$. 
Write $f=\sum_{D \in F}f_D$ where $F$ is a finite collection of compact open bisections and for each $D \in F$,  $f_D = a_D1_D$ with nonzero $a_D \in \C$.  
We can assume each $D \subseteq \bigcup_{I=1}^k B_i$ by Lemma~\eqref{lem:supportsinC}. 
Fix $D \in F$.  Since $D \subseteq K$, the collection $\{D \cap B_i :   1 \leq i \leq k\}$ is a cover of $D$ by open sets.   Since $G$ is ample and hence as a basis of compact open bisections, for each $\gamma \in D$, there exist $1 \leq i_\gamma \leq k$ and  a compact open bisection $D_{\gamma,i_{\gamma}}$ such that
$\gamma \in D_{\gamma,i_{\gamma}} \subseteq D \cap B_{i_{\gamma}}$.  The collection $\{D_{\gamma,i_{\gamma}}\}_{\gamma \in D}$ covers $D$ and hence has a finite subcover.  Since each $D_{\gamma,i_{\gamma}}$ is compact open in the the Hausdorff subspace $D$, it is clopen in $D$.   So we can disjointify (and relabel) to get a finite disjoint cover of $D$ by compact open bisections $\{D_{\gamma,i_\gamma}\}_{\gamma \in I_D}$ for a finite set $I_D \subseteq D$ where each $D_{\gamma,i_{\gamma}} \subseteq D \cap B_{i_{\gamma}}$.  So 
\[f_D = \sum_{\gamma \in I_D} (f_D)|_{D_{\gamma,i_{\gamma}}}\]
 and each summand is in $A(G)$ by Lemma~\ref{lem:rest}. 

 For pairs $(\gamma,i)$ with $\gamma \in I_D$ and $i \neq i_{\gamma}$,  define  $D_{\gamma,i}$ to be the empty set and write
\begin{equation}\label{eq:f}
f=  \sum_{D\in F}\left(  \sum_{\gamma \in I_D}  (f_D)|_{D_{\gamma,i_{\gamma}}} \right) = \sum_{D \in F} \left( \sum_{i=1}^k  \sum_{\gamma \in I_D} (f_D)|_{D_{\gamma,i}} \right)
= \sum_{i=1}^k \left( \sum_{D\in F}  \sum_{\gamma \in I_D} (f_D)|_{D_{\gamma,i}} \right).
\end{equation}
We start out by defining ${f}_{i}^0 \in C_c(B_i) \cap A(G)$ to be the function in the $i$th summand in the last expression for $f$ in \eqref{eq:f}. 
Then \eqref{it2:boundedsummand} holds for the $f_{i}^0$'s.   Since each  ${f}_{i}^0$ is defined on an open Hausdorff subset, it is locally constant.  

Here things get more technical. In what follows, we adjust each of these functions $f_i^0$ so that  \eqref{it2:boundedsummand} still holds 
in order to ensure  \eqref{it1:boundedsummand} holds as well.   The argument that follows does not depend on the exact definition of $f_{i}^0$, it only uses that each $f_{i}^0$ is a locally constant element of $A(G)$ that is zero outside $B_i$ and that \eqref{it2:boundedsummand} holds.

Fix $\epsilon>0$.   If 
\begin{equation}
\label{eq:prop}
 \|f_{i}^0\|_{\infty}\leq  \|f\|_{\infty}+\epsilon
\end{equation}
is true for all $i$, then for each $i$, set $f_i =f_i^0$ and we are done.  
Otherwise, fix $i$ such that \eqref{eq:prop} fails for $i$.  Thus $ \|f_i^0\|_{\infty} > \|f\|_{\infty} +\epsilon$.
Consider the set
\[O = \{\gamma \in B_i : |f^0_{i}(\gamma)| < \|f\|_{\infty} + \epsilon\},\]

Then  $O$ is open as it is the inverse image of an open set under the composition of two continuous maps.
Since  $\|f^0_{i}\|_{\infty} > \|f\|_{\infty}+\epsilon$, the set $B_i \setminus O$ is nonempty. 
Further $B_i\setminus O$ is closed in $B_i$ and hence compact.  

Fix $\gamma \in B_i \setminus O$.  
Then $|f^0_{i}(\gamma)| > \|f\|_{\infty}$ so in particular $f(\gamma) \neq f^0_{i}(\gamma)$ and hence there exists a maximal nonempty 
\[S_{\gamma}\subseteq \{1, ..., i-1, i+1, ..., k\}\] 
such that 
\[\gamma \in B_i \cap \left(\bigcap_{j \in S_{\gamma}}B_j\right)\]  and 
\[f(\gamma) = f_{i}^0(\gamma) + \sum_{j \in S_{\gamma}} f_{j}^0(\gamma).\]

Choose a compact open bisection 
$C_{\gamma, S_{\gamma}} \subseteq B_i \cap \left( \bigcap_{j \in S_{\gamma}}B_j \right)$ such that $\gamma \in C_{\gamma,S_{\gamma}}$ and for each $j \in S_{\gamma}$,  ${f}_{j}^0$ is constant on $C_{\gamma, S_{\gamma}}$.  
Then the collection $\{C_{\gamma, S_\gamma}\}_{\gamma \in B_i \setminus O}$ covers $B_i \setminus O$ so there exists a finite subcover.  
Since this is all  taking place inside of the Hausdorff subspace $B_i$, we can disjointify to get a finite disjoint  subcover 
$\{C_{\gamma_p,S_p}\}_{p=1}^s$ of $B_i \setminus O$ such that for any $\alpha \in C_{\gamma_p,S_{p}}$ we have
\begin{equation}
\label{eq:gammap}
{f}_{i}^0(\alpha) + \sum_{j \in S_{p}} {f}_{j}^0(\alpha) = f(\gamma_p).\end{equation}

Now, we adjust the functions.  First, we define $f_{i}$ as follows:
\[
f_{i}={f}_{i}^0 + \sum_{j=1}^k\  \sum_{\{p: j \in S_p\}} ({f}_{j}^0)|_{C_{\gamma_p,S_p}}.\]
To ensure \eqref{it2:boundedsummand} still holds,  for $j \neq i$ define 
 \[
f_{j} = {f}_{j}^0 - \sum_{\{p: j \in S_p\}} ({f}_{j}^0)|_{C_{p,S_p}}.
\]  In each case, we stay inside of $A(G)$ by Lemma~\ref{lem:rest}.

Notice for $j \neq i$ we have  $\|f_{j} \|_{\infty} \leq \| {f}_{j}^0\|_{\infty}$ as we are just forcing it to be 0 on a larger set.
We claim that $\|f_{i}\|_{\infty} \leq \|f\|_{\infty}+\epsilon$.  
To prove the claim, fix $\alpha \in G$.  It suffices to show $|f_{i}(\alpha)| \leq \|f\|_{\infty}+\epsilon$.   
If for every $p$,  $\alpha \notin C_{\gamma_p,S_p}$, then $\alpha \in O$ and  
\[
|f_{i}(\alpha)|=|f_{i}^0(\alpha)| < \|f\|_{\infty}+\epsilon.  
\]
On the other hand, if there exists $q$ such that $\alpha \in C_{\gamma_q, S_q}$, then $q$ is unique since the collection is disjoint.  Now
\begin{align*}
f_{i}(\alpha) &= {f}_{i}^0(\alpha) + \sum_{j=1}^k \ \sum_{\{p: j \in S_p\}} ({f}_{j}^0)|_{C_{p,S_p}} (\alpha) \\
&= {f}_{i}^0(\alpha) +  \sum_{j \in S_q} ({f}_{j}^0) (\alpha) \\
&= f(\gamma_p)
\end{align*}
 by \eqref{eq:gammap}.  Thus $|f_{i}(\alpha)| \leq \|f\|_{\infty}$ proving the claim.  
Thus 
\[\|f_{i}\|_{\infty} < \|f\|_{\infty} + \epsilon.\]

Now check if there are any $j$ such that  \eqref{eq:prop} fails.  If so, pick one such $j$ and repeat the above process to get a new collection of functions continuing until no such $j$ exists and hence \eqref{it1:boundedsummand} holds. 
\end{proof}

We present one last lemma before presenting the proof of Proposition~\ref{prop:autobound}.  It is well-known but we provide the details as they don't seem to appear explicitly in the literature.  It clarifies that we are in a situation where we can apply the disintegration theorem  \cite[Theorem~7.8]{MW08} in Corollary~\ref{cor:autobound}.  

\begin{lemma}
\label{lem:ILTconv}
Let $G$ be an ample groupoid with $\go$ Hausdorff.  Then $A(G)$ is dense in $\mathscr{C}(G)$ with respect to the inductive limit topology in that for every $f$ in $\mathscr{C}(G)$, there exists a net $(f_n) \subseteq A(G)$ such that $f_n \to f$ in the inductive limit topology. 
\end{lemma}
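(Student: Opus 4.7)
The plan is to reduce to the case where $f$ is supported in a single open Hausdorff set and then approximate $f$ there by step functions built from compact open bisections. By definition, $\mathscr{C}(G) = \operatorname{span}\{C_c(U) : U \subseteq G \text{ open Hausdorff}\}$. Since the inductive limit topology is preserved under finite linear combinations (supports land in a common compact set, and uniform convergence sums), it suffices to approximate each $f \in C_c(U)$ for a fixed open Hausdorff $U \subseteq G$ and then take linear combinations.

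So fix such an $f \in C_c(U)$ and let $K = \overline{\supp^o(f)}^U$, which is compact in $U$. The first task is to manufacture a fixed compact open set $L \subseteq U$ containing $K$. Because $G$ is ample and $U$ is open, each $\gamma \in K$ lies in some compact open bisection $C_\gamma \subseteq U$. By compactness of $K$, finitely many of these, say $C_{\gamma_1}, \dots, C_{\gamma_m}$, cover $K$. Set
\[L := \bigcup_{j=1}^m C_{\gamma_j},\]
a compact open subset of $U$; being a subspace of the Hausdorff set $U$, $L$ is Hausdorff.

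Now fix $\epsilon > 0$. For every $\gamma \in L$, continuity of $f$ at $\gamma$ and ampleness of $G$ together yield a compact open bisection $B_\gamma$ with $\gamma \in B_\gamma \subseteq L$ and $|f(\alpha) - f(\gamma)| < \epsilon$ for all $\alpha \in B_\gamma$. Compactness of $L$ gives a finite subcover $B_{\gamma_1},\dots,B_{\gamma_n}$, and since $L$ is Hausdorff, the disjointification procedure described in the preliminaries produces disjoint compact open bisections $B_1',\dots,B_n'$ covering $L$ with $B_i' \subseteq B_{\gamma_i}$. Define
\[f_\epsilon := \sum_{i=1}^n f(\gamma_i)\, 1_{B_i'} \in A(G).\]
Then $\supp^o(f_\epsilon) \subseteq L$ and $\|f - f_\epsilon\|_\infty \leq \epsilon$ (on each $B_i'$ we replace $f$ by the value $f(\gamma_i)$, which differs by at most $\epsilon$; outside $L$ both are zero since $\supp^o(f) \subseteq K \subseteq L$). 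Letting $\epsilon$ range over a decreasing sequence gives an inductive-limit convergent sequence in $A(G)$ with limit $f$, and taking linear combinations extends this to all of $\mathscr{C}(G)$.

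The step I expect to be the main obstacle is the first one: producing the compact open frame $L \subseteq U$ containing $K$. Without ampleness there is no local supply of compact open sets from which to assemble $L$, but ampleness together with the Hausdorffness of $U$ makes the construction clean. After that, the approximation by locally constant step functions on the Hausdorff subspace $L$ is a routine exercise in uniform continuity, and the crucial inductive-limit requirement --- that all the supports stay inside the single compact set $L$ --- is built in by construction.
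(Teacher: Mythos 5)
Your proof is correct, and at the top level it follows the same strategy as the paper: reduce to a single summand supported in a Hausdorff piece, then uniformly approximate that summand by locally constant functions whose supports all sit inside one fixed compact set. The execution differs in which tools carry the two steps. The paper invokes Exel's decomposition result (\cite[Proposition~3.10]{Exe08}) to write $f=\sum f_i$ with each $f_i\in C_c(B_i)$ for \emph{compact open bisections} $B_i$, and then cites Stone--Weierstrass on each compact Hausdorff $B_i$ to get density of $A(G)\cap C_c(B_i)$. You instead work straight from the definition of $\mathscr{C}(G)$ with a general open Hausdorff $U$, manufacture the compact open frame $L\supseteq\overline{\supp^o(f)}^U$ by covering with compact open bisections (this is where ampleness enters, exactly as you flag), and then prove the density statement by hand via an $\epsilon$-cover and the disjointification procedure from the preliminaries. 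What your route buys is self-containment: no appeal to Exel's decomposition lemma or to Stone--Weierstrass, and the approximants are written down explicitly as step functions $\sum_i f(\gamma_i)1_{B_i'}$ with $\supp^o(f_\epsilon)\subseteq L$ built in. What the paper's route buys is brevity, and its decomposition into $C_c(B_i)$ for compact open bisections is reused elsewhere in the paper, so citing it here costs nothing. Both arguments are sound; one small point worth making explicit in yours is that the disjointified pieces $B_i'$ are again compact open bisections (compact because each removed $B_{\gamma_j}$ is closed in the Hausdorff $L$, and bisections because they sit inside $B_{\gamma_i}$), so that $1_{B_i'}$ really lies in $A(G)$ --- but this is exactly the disjointification remark in the preliminaries, so it is not a gap.
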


\begin{proof}
Fix $f \in \mathscr{C}(G)$.  Then $f = \sum_{i=1}^n f_i$ where each $f_i \in C_c(B_i)$ for compact open bisections $B_i$ by \cite[Proposition~3.10]{Exe08}.
For each $i$, $B_i$ is a compact Hausdorff space and hence $f_i$ is the uniform limit of a net functions 
$(f_{i,n}) \subseteq A(G) \cap C_c(B_i)$ by the Stone Weierstrauss theorem.   
Thus $\sum_{i}f_{i,n}$ converges uniformly to $f$ inside the compact set $\bigcup_{i=1}^n B_i$ and the result follows. 
\end{proof}

\begin{proof}[Proof of Proposition~\ref{prop:autobound}]
Fix a $*$-homomorphism $\pi:A(G) \to B(\mathscr{H})$. 
Suppose we have a net $(f_n)_{n \in (J,\preceq)}$  in $A(G)$ such that $f_n \to f \in A(G)$  uniformly and $\supp^o(f_n) \subseteq K$ eventually for some compact $K$.  
Since $K$ is compact, there exists a finite collection of compact open bisections  $B_1, ..., B_k$ that cover $K$.  
Fix $\epsilon >0$ and apply Lemma~\ref{lem:boundedsummands} to write each $f_n-f = \sum_{i=1}^k f_{n,i}$  such that each $f_{n,i} \in A(G)$ with  $\supp^o(f_{n,i}) \subseteq B_i$ and   
\[\|f_{n,i}\|_{\infty}\leq \|f_n-f \|_{\infty}+ \dfrac{\epsilon}{2k}.\]  
Choose $\alpha$ so that 
\[ \|f_n-f\|_{\infty} \leq \dfrac{\epsilon}{2k}\]
for $\alpha \preceq n$.  
Now we compute, applying Lemma~\ref{lem:supbis} at the second inequality 
\[\|\pi(f_n-f)\|\leq \sum_{i=1}^k\|\pi( f_{n,i})\| \leq \sum_{i=1}^k \|f_{n,i}\|_{\infty} \leq k (\|f_n-f\|_{\infty}+\dfrac{\epsilon}{2k}) \leq \epsilon  \]
when $\alpha \preceq n$.

\end{proof}

\begin{cor}
\label{cor:autobound}
Let $G$ be a second countable ample groupoid with Hausdorff unit space.
Suppose $\pi:A(G) \to B(\mathscr{H})$ is a *-homomorphism for some Hilbert space $\mathscr{H}$.  Then $\pi$ is bounded with respect to the $I$-norm on $A(G)$.
\end{cor}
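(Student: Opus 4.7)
The plan is to reduce the statement to Renault's disintegration theorem \cite[Theorem~7.8]{MW08}, which says that for second countable $G$ any $*$-representation of $\mathscr{C}(G)$ that is continuous with respect to the inductive limit topology is automatically $I$-norm bounded. Since $\pi$ is only defined on $A(G)$, the first step is to extend it to a $*$-representation $\bar\pi$ of $\mathscr{C}(G)$ that inherits ILT-continuity from $\pi$, and then invoke the disintegration theorem on $\bar\pi$.

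For the extension, fix $f \in \mathscr{C}(G)$. By Lemma~\ref{lem:ILTconv} there is a net $(f_n) \subseteq A(G)$ with $f_n \to f$ in the inductive limit topology; in particular the $f_n$'s have supports eventually inside a common compact set $K$. If $(g_n)$ is any other such approximating net (which one can arrange to have supports in a common compact set with the $f_n$'s), then $f_n - g_n \to 0$ in the ILT, and Proposition~\ref{prop:autobound} forces $\|\pi(f_n) - \pi(g_n)\| \to 0$. Hence $(\pi(f_n))$ is norm-Cauchy in $B(\mathscr{H})$ and its limit is independent of the approximating net, so I can set $\bar\pi(f) := \lim \pi(f_n)$. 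Linearity, multiplicativity, and the $*$-property transfer to $\bar\pi$ by continuity of the corresponding operations on $B(\mathscr{H})$ together with the fact that convolution and involution are ILT-continuous on $\mathscr{C}(G)$; ILT-continuity of $\bar\pi$ itself follows from a diagonal argument again leveraging Proposition~\ref{prop:autobound}.

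With $\bar\pi:\mathscr{C}(G) \to B(\mathscr{H})$ in hand and ILT-continuous, \cite[Theorem~7.8]{MW08} produces a constant $M$ with $\|\bar\pi(f)\| \leq M\|f\|_I$ for every $f \in \mathscr{C}(G)$, and restricting to $A(G) \subseteq \mathscr{C}(G)$ finishes the proof. The main obstacle is the extension step, since the inductive limit topology is not an actual topology in the sense of Muhly--Williams and both well-definedness of $\bar\pi$ and the joint continuity of convolution and involution must be verified directly; the key technical point throughout is to arrange the approximating nets to have supports contained in a single compact set, so that the uniform-convergence arguments underlying Proposition~\ref{prop:autobound} apply.
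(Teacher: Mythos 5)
Your proposal is correct and follows essentially the same route as the paper: use Proposition~\ref{prop:autobound} for inductive-limit-topology continuity, Lemma~\ref{lem:ILTconv} to extend $\pi$ to a $*$-homomorphism on $\mathscr{C}(G)$, and then apply Renault's disintegration theorem \cite[Theorem~7.8]{MW08}. The paper states the extension step in one line where you spell out the well-definedness and Cauchy details, but the argument is the same.
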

\begin{proof}
From Proposition~\ref{prop:autobound} we have that $\pi$ is continuous with respect to the inductive limit topology.  Since $A(G)$ is dense in $\mathscr{C}(G)$ with respect to the inductive limit topology by Lemma~\ref{lem:ILTconv}, we can extend $\pi$ to a $*$-homomorphism $\tilde{\pi}: \mathscr{C}(G) \to B(\mathscr{H})$ that is also continuous with respect to the inductive limit topology.   Since $G$ is second countable, Renault's disintegration theorem \cite[Theorem~7.8]{MW08} gives that $\tilde{\pi}$ is bounded with respect to $\|\cdot\|_I$ and hence so is $\pi$.
\end{proof}

\section{\'Etale groupoids}
\label{sec:etaleautobound}

In this section, we will move away from ample groupoids and consider second countable, locally compact \'etale groupoids with Hausdorff unit spaces.    
By \cite[Proposition~3.10]{Exe08},  $\mathscr{C}(G)$ is the linear span of functions that are each in some $C_c(U)$ for an open bisection $U$ that is contained in a compact set. 
We show that every *-homomorphism from $\mathscr{C}(G)$ to $B(\mathscr{H})$ is bounded with respect to the I-norm.   
Previously this was only known for Hausdorff \'etale groupoids, see for example \cite[Lemma~2.3.3]{Sims18}.     
Note that we add the assumption that $G$ is second countable so that we have that $G$ is \emph{locally normal}.   
This also means we can work with sequences in $\mathscr{C}(G)$  instead of nets.  The main result of this section is the following.

\begin{thm}
\label{thm:etaleautobound}Let $G$ be a  second countable, locally compact \'etale groupoid with $\go$ Hausdorff.  
Suppose $\pi:\mathscr{C}(G) \to B(\mathscr{H})$ is a *-homomorphism for some Hilbert space $\mathscr{H}$.  Then $\pi$ is continuous with respect to the inductive limit topology on $\mathscr{C}(G)$.
\end{thm}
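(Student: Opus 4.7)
My plan is to mirror the proof of Proposition~\ref{prop:autobound}, replacing characteristic functions of compact open bisections (unavailable without ampleness) with continuous bump functions. Two ingredients are needed: an étale analog of the bisection bound (Lemma~\ref{lem:supbis}) and an étale analog of the decomposition step (Lemma~\ref{lem:boundedsummands}).

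For the bisection bound, I claim that $\|\pi(g)\| \leq \|g\|_\infty$ for any $g \in C_c(U)$ with $U$ an open bisection. Because continuous functions admit continuous square roots, a slicker argument than Proposition~\ref{prop:normdec} is available here. Via $\|\pi(g)\|^2 = \|\pi(g^* g)\|$ and the fact that $g^* * g \in C_c(s(U))$ with $\|g^* g\|_\infty = \|g\|_\infty^2$, the question reduces to $U \subseteq \go$. For real $g \in C_c(U)$ with $0 \leq g \leq 1$, put $h := \sqrt{g(1-g)} \in C_c(U)$; since convolution on $\go$ is pointwise multiplication, $h^* * h = g - g^2$, and applying $\pi$ gives $\pi(g) - \pi(g)^2 = \pi(h)^* \pi(h) \geq 0$ in $B(\mathscr{H})$. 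Combined with self-adjointness, this forces $\sigma(\pi(g)) \subseteq [0,1]$, so $\|\pi(g)\| \leq 1$; scaling and another application of the C*-identity handle the general case.

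For the decomposition, given a compact $K \supseteq \supp^o(f)$ and a finite cover of $K$ by open bisections $U_1, \ldots, U_k$, I would construct a continuous partition of unity $\{\Phi_i\}$ with $\Phi_i \in C_c(U_i)$, $0 \leq \Phi_i \leq 1$, and $\sum_i \Phi_i \equiv 1$ on $K$, then set $g_i := \Phi_i f$ so that $\sum_i g_i = f$ and $\|g_i\|_\infty \leq \|f\|_\infty$ pointwise. The partition is built by choosing bump functions $\psi_l \in C_c(U_{i(l)})$ whose unit-level sets cover $K$ and setting $\Phi_i := \sum_{\{l\,:\,i(l) = i\}} \psi_l \prod_{l' < l}(1 - \psi_{l'})$; the telescoping identity $\sum_i \Phi_i = 1 - \prod_l (1 - \psi_l)$ yields the partition-of-unity property on $K$. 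Writing $f = \sum_j h_j$ via \cite[Proposition~3.10]{Exe08} and expanding $g_i = \sum_j \Phi_i h_j$, each summand lies in $C_c(U_i \cap V_j)$ and extends by zero to an element of $C_c(U_i)$.

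With both ingredients in hand, the theorem follows as in Proposition~\ref{prop:autobound}: given $f_n \to f$ uniformly with $\supp^o(f_n) \subseteq K$ eventually, decompose $f_n - f = \sum_i g_{n,i}$ with $\|g_{n,i}\|_\infty \leq \|f_n - f\|_\infty$, and bound $\|\pi(f_n - f)\| \leq \sum_i \|g_{n,i}\|_\infty \leq k \|f_n - f\|_\infty \to 0$. The principal obstacle will be justifying the continuity manipulations in the non-Hausdorff $G$, especially that the products $\psi_l \prod(1 - \psi_{l'})$ and $\Phi_i h_j$ really lie in the claimed $C_c$-spaces; the key technical observation is that when $\phi \in C_c(U)$ and $\eta \in C_c(V)$ for open bisections $U, V$, the product $\phi \eta$ has support contained in the compact set $\supp(\phi) \cap \supp(\eta) \subseteq U \cap V$, which is closed in the Hausdorff space $U$, so the extension by zero yields a continuous function on $U$. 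Second countability enters only later, through the disintegration theorem in the corollary that will deduce $I$-norm boundedness from inductive limit continuity.
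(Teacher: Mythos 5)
Your first ingredient (the bound $\|\pi(g)\|\le\|g\|_\infty$ for $g\in C_c(U)$ with $U$ an open bisection) is correct; it is essentially \cite[Proposition~3.14]{Exe08}, which is exactly what the paper invokes at the corresponding step. The gap is in the decomposition step, and it is fatal as stated: the ``key technical observation'' at the end of your proposal is false for non-Hausdorff $G$. If $\phi\in C_c(U)$ and $\eta\in C_c(V)$ for open bisections $U,V$, the set $\supp(\phi)\cap\supp(\eta)$ need not be compact, need not be closed in $U$, and the pointwise product $\phi\eta$ (extended by zero) need not be continuous on $U$. In the two-headed snake of Example~\ref{ex:2snake}, take $U=\go$, $V=B=(\go\setminus\{0\})\cup\{\gamma_1\}$, $\phi=1_{\go}$ and $\eta=1_B$: then $\phi\eta=1_{\go\setminus\{0\}}$, whose support $\go\setminus\{0\}$ is neither compact nor closed in $\go$, and which is discontinuous at $0\in U$. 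The same pathology destroys your partition of unity: a bump $\psi_l$ supported near $\gamma_1$ in $V$ with $\psi_l(\gamma_1)=1$ restricts to a function on $\go$ that is discontinuous at $0$, so the products $\psi_l\prod_{l'<l}(1-\psi_{l'})$ and $\Phi_i h_j$ do not lie in the claimed $C_c$-spaces; and there is no way to choose the bumps to avoid this, since every compact neighbourhood of $\gamma_1$ in $V$ accumulates at $0\in U$.

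This is precisely the difficulty that Lemmas~\ref{lem:shrink}--\ref{lem:etalebdsummands} of the paper are built to circumvent. Instead of a global partition of unity, the paper peels off one bisection at a time: it restricts $f$ to $F=\tilde B_{k+1}\setminus\bigcup_{i\le k}\tilde B_i$, which is closed and Hausdorff in $\tilde B_{k+1}$, proves continuity of $f|_F$ there by a delicate subsequence argument, and uses Tietze extension to produce $f_{k+1}\in C_c(B_{k+1})$ agreeing with $f$ on $F$; partitions of unity appear only inside a single bisection $B_i$ (Lemma~\ref{lem:etalebdsummands}), where everything is Hausdorff, and even then the achievable sup-norm bound is $2\|f\|_\infty+\epsilon$ rather than your claimed $\|f\|_\infty$ --- another sign that the clean partition-of-unity bound is not available. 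To repair your argument you would essentially have to reproduce this machinery.
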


The proof of Theorem~\ref{thm:etaleautobound} will follow after we establish some lemmas that parallel the ideas of Lemma~\ref{lem:supportsinC} and Lemma~\ref{lem:boundedsummands}. 

\begin{lemma}
\label{lem:shrink}
Let $G$ be a  second-countable, locally compact \'etale groupoid with $\go$ Hausdorff.
Let $f \in \mathscr{C}(G)$ such that  $f = \sum_{V \in H}f_V$ where $H$ is a finite collection of open bisections and each $f_V \in C_c(V)$.  
Suppose $\supp^o(f) \subseteq O$ for some open set $O \subseteq G$.
Then there exists a finite collection of open bisections $F$ and
 functions $f_U \in C_c(U)$ for each $U \in F$  such that $\supp^o(f_U) \subseteq O$ and $f = \sum_{U \in F} f_U$.
\end{lemma}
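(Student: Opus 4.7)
The proof will be by induction on $|H|$, following the structure of Lemma~\ref{lem:supportsinC} but replacing its discrete disjointification step with a partition-of-unity argument on the open bisection $V_1$. Note that $V_1$ is second countable locally compact Hausdorff (being homeomorphic via $s$ to an open subset of $\go$), hence paracompact.

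The base case $|H|=1$ is immediate: set $F=H$ and observe $\supp^o(f_V)=\supp^o(f)\subseteq O$. For the inductive step, assume the lemma for $n$ and take $|H|=n+1$. If every $\supp^o(f_V)\subseteq O$ we are done, so pick $V_1\in H$ with $\supp^o(f_{V_1})\not\subseteq O$. The essential observation is that for $\gamma\in V_1\setminus O$ the equation $f(\gamma)=0$ gives $f_{V_1}(\gamma)=-\sum_{V\in H\setminus\{V_1\},\,\gamma\in V}f_V(\gamma)$. Two consequences follow: first, $f_{V_1}$ vanishes on the (closed in $V_1$) set $V_1\setminus\bigl(O\cup\bigcup_{V\ne V_1}V\bigr)$; and second, $\supp^o(f_{V_1})$ is contained in the open set $W:=(V_1\cap O)\cup\bigcup_{V\in H\setminus\{V_1\}}(V_1\cap V)$.

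Using paracompactness of $V_1$, I would choose a continuous partition of unity $\{\phi_0\}\cup\{\phi_V:V\in H\setminus\{V_1\}\}$ on $V_1$, subordinate to the finite open cover $\{V_1\cap O\}\cup\{V_1\cap V:V\in H\setminus\{V_1\}\}$ of $W$ and arranged so that $\phi_0+\sum_{V\ne V_1}\phi_V=1$ on $\supp^o(f_{V_1})$. Because $f_{V_1}$ has compact support and the subordinate family is locally finite, each product $\phi_0 f_{V_1}$ and $\phi_V f_{V_1}$ has compact support contained in the corresponding cover element; the vanishing of $f_{V_1}$ outside $W$ then ensures each product extends continuously by zero to all of $G$. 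A short check using Hausdorffness of each open bisection yields $\phi_0 f_{V_1}\in C_c(V_1\cap O)$ and $\phi_V f_{V_1}\in C_c(V)$. Substituting $f_{V_1}=\phi_0 f_{V_1}+\sum_{V\ne V_1}\phi_V f_{V_1}$ into $f$ gives
\[
f=\phi_0 f_{V_1}+\sum_{V\in H\setminus\{V_1\}}\bigl(\phi_V f_{V_1}+f_V\bigr),
\]
and the bracketed sum represents $g:=f-\phi_0 f_{V_1}$ over only $n$ open bisections with summands in $C_c(V)$. Since $\supp^o(g)\subseteq\supp^o(f)\cup\supp^o(\phi_0 f_{V_1})\subseteq O$, the inductive hypothesis applies to $g$; appending $\phi_0 f_{V_1}$ (whose open support lies in $V_1\cap O\subseteq O$) delivers the required decomposition.

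The main technical obstacles I anticipate are (a) establishing the containment $\supp^o(f_{V_1})\subseteq W$, the crux of the cancellation argument; and (b) verifying that $\phi_V f_{V_1}$ is continuous on the bisection $V$ at points outside $V_1$---this uses that its support lies in the Hausdorff bisection $V_1\cap V$, where it is compact and therefore closed in $V$, allowing continuous extension by zero.
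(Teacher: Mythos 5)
Your overall strategy is the paper's own: induct on $|H|$, use the cancellation $f_{V_1}(\gamma)=-\sum_{V\ne V_1,\ \gamma\in V}f_V(\gamma)$ for $\gamma\notin O$ to produce an open cover of $\supp^o(f_{V_1})$, split $f_{V_1}$ by a partition of unity, absorb the pieces $\phi_Vf_{V_1}$ into the other summands, and apply the inductive hypothesis to the resulting $n$-term sum. However, there is a genuine gap in the partition-of-unity step as you have set it up. You ask for functions $\phi_0,\phi_V$ that are continuous \emph{on all of $V_1$}, supported in the cover elements of $W=(V_1\cap O)\cup\bigcup_{V\ne V_1}(V_1\cap V)$, and summing to $1$ on $\supp^o(f_{V_1})$. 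By continuity such a sum would equal $1$ on $\overline{\supp^o(f_{V_1})}^{V_1}$, which forces $\overline{\supp^o(f_{V_1})}^{V_1}\subseteq W$ --- and that containment can fail. Concretely, take $G=\go=\R$ as a trivial \'etale groupoid, $V_1=\R$, $V_2=(1,4)$, $O=\R\setminus(\{0\}\cup[2,3])$, $f_{V_1}(x)=x\max(0,1-|x|)$ plus a bump on $[2,3]$, and $f_{V_2}$ the negative of that bump. Then $\supp^o(f)\subseteq O$ and $\supp^o(f_{V_1})\not\subseteq O$ (so $V_1$ must be processed), but $0\in\overline{\supp^o(f_{V_1})}$ while $0\notin W$, so no partition of unity with your stated properties exists. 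Note this already happens for Hausdorff $G$, so it is not a non-Hausdorff pathology but a defect in the choice of where the $\phi$'s live.

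The paper's proof avoids this by taking the partition of unity $\{\rho_W\}$ continuous only on the open set $\supp^o(f_{V_1})$ (which \emph{is} covered by the sets $U_W=\supp^o(f_{V_1})\cap O$, resp.\ $\supp^o(f_{V_1})\cap\supp^o(f_W)$), and then doing the real work of the lemma: showing by hand that each product $\rho_Wf_{V_1}$, extended by zero, is continuous on $V_1$ and, for $W\ne V_1$, on $W$. The key estimates are $|\rho_Wf_{V_1}(\alpha_n)|\le|f_{V_1}(\alpha_n)|\to 0$ for sequences converging to boundary points of $\supp^o(f_{V_1})$ inside $V_1$, together with a subsequence/Hausdorffness argument for sequences in $\supp^o(f_{V_1})$ that converge in $W$ to a point outside $V_1$ (the genuinely non-Hausdorff case). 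Your anticipated fix for obstacle (b) --- that $\phi_Vf_{V_1}$ has compact support inside $V_1\cap V$, hence closed in $V$ --- also leans on the too-strong partition of unity, since with $\phi_V$ only continuous on $\supp^o(f_{V_1})$ the closure of $\supp^o(\phi_Vf_{V_1})$ may escape $V_1\cap V$. So the skeleton of your argument is right, but the continuity of the pieces is the technical heart of the lemma and still needs to be supplied.
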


\begin{proof}
We proceed by induction on the size of $H$.  For the base case, suppose $H$ has a single element $V$.  Then $f \in C_c(U)$  for some open bisection $U$ and
 by assumption $\supp^o(f) \subseteq O$ so $F=H$ suffices.
 
For the inductive step, suppose the lemma is true for functions that can be written as a sum of $n$ functions, each in some $C_c(V)$ for some $V$.  
Fix  $f \in \mathscr{C}(G)$ such that $\supp^o(f) \subseteq O$ and  $f=\sum_{V \in H}f_H$ with $|H|=n+1$.  

Fix $V \in H$. 
Let 
\[U_V = \supp^o(f_V) \cap O\]
 and for each $W \in H$, $W \neq V$, let
 \[ U_W = \supp^o(f_V) \cap \supp^o(f_W).\]
Since $\supp^o(f) \subseteq O$,  if $\gamma \in \supp^o(f_V) \setminus O$ then $\gamma \in U_W$ for some $W \in H$, $W \neq V$.  
So the collection $\{U_W\}_{W \in H}$ is an open cover of $\supp^o(f_V)$.

Since the open subset $\supp^o(f_V)$ is a normal topological space with respect to the subspace topology,  
we can find a partition of unity $\{\rho_W\}_{W \in H}$  in $C(\supp^o(f_V))$ (the continuous functions on $\supp^o(f_V)$) 
subordinate to $\{U_W\}_{W \in H}$ such that for each $\gamma \in \supp^o(f_V)$ we have
 $\sum_{W \in 
 H} \rho_W(\gamma)=1$ (see, for example, \cite[Theorem~5.1]{munkres}).
So $f_V = \sum_{W \in H} \rho_Wf_V$ (with pointwise multiplication). 
We claim that
\begin{enumerate}
\item \label{it1rho} For all $W \in H$, $\rho_Wf_V \in C_c(V)$,
\item \label{it2rho}for $W \neq V$, $\rho_Wf_V \in C_c(W)$.
\end{enumerate}
For item~\eqref{it1rho}, fix $W \in H$. Then $\rho_Wf_V $ is continuous on $\supp^o(f_V)$. We show it is continuous on all of $V$.  
Let $(\alpha_n)$ be a sequence in $V$ such that $\alpha_n \to \alpha \in V$. 
If $\alpha \in \supp^o(f_V)$, then $\alpha_n \in  \supp^o(f_V)$ eventually and 
$\rho_Wf_V(\alpha_n) \to \rho_Wf_V(\alpha)$.  
Suppose $\alpha \notin  \supp^o(f_V)$.  So $\rho_Wf_V(\alpha)=0$ and
\[| \rho_Wf_V(\alpha_{n})| = |\rho_W(\alpha_n)||f_V(\alpha_n)| \leq |f_V(\alpha_n)| \to 0.\]
Thus $\rho_Wf_V$ is continuous on $V$.  Since
$\overline{\supp^o(\rho_Wf_V)}^V \subseteq \overline{\supp^o(f_V)}^V$ is compact,  item~\eqref{it1rho} follows.

For item~\eqref{it2rho}, fix $W \in H$ with $W \neq V$. Then $\rho_Wf_V$ is continuous on $U_W$.
Let $(\alpha_n)$ be a sequence in $W$ such that $\alpha_n \to \alpha \in W$. 
We consider three cases.  For the first case,  suppose $\alpha \in \supp^o(f_V)$.   
Then $\alpha_n \in  \supp^o(f_V)$ eventually and 
$\rho_Wf_V(\alpha_n) \to \rho_Wf_V(\alpha)$.  
For the second case, suppose $\alpha \in  V \setminus \supp^o(f_V)$.
So $\rho_Wf_V(\alpha)=0$ and
\[| \rho_Wf_V(\alpha_{n})| = |\rho_W(\alpha_n)||f_V(\alpha_n)| \leq |f_V(\alpha_n)| \to 0.\]

For the third case, suppose $\alpha \in W \setminus V$. 
If $\alpha_n \notin \supp^o(f_V)$ eventually, then 
\[\rho_Wf_V(\alpha_n)=0=\rho_Wf_V(\alpha)\]
 eventually.  
So we may assume there is a subsequence such that every $\alpha_{n_j} \in \supp^o(f_V)$.
Since the closure of the  support of $f_V$ in $V$ is a compact normal subspace of $V$, it is complete and hence  there exists $\alpha_V \in V$ such that $\alpha_{n_j} \to \alpha_V$.  
Since $W$ is Hausdorff and $\alpha \in W$, $\alpha_V \notin W$ and hence $\rho_W(\alpha_V)=0$.   
Now  applying item~\eqref{it1rho} gives 
\[\rho_Wf_V(\alpha_{n_j}) \to \rho_Wf_V(\alpha_V) = \rho_W(\alpha_V)f_V(\alpha_V) = 0 = \rho_Wf_V(\alpha).\]
So $\rho_Wf_V$ is continuous on $W$.
We have $\overline{\supp^o(\rho_Wf_V)}^W \subseteq \overline{\supp^o(f_W)}^W$ which is compact proving item~\eqref{it2rho}. 

Now  
\[f=\rho_Vf_V + \sum_{W \in H, W \neq V} f_W + \rho_Wf_V\]
with $\supp^o(\rho_Vf_V) \subseteq O$.  To prove the lemma, it suffices to check that
\[\supp^o(\sum_{W \in H, W \neq V} f_W + \rho_Wf_V) \subseteq O\]
for then the inductive hypothesis applies.
By way of contradiction, suppose there exists $\gamma \in G \setminus O$ such that 
$\sum_{W \in H, W \neq V} f_W + \rho_Wf_V(\gamma) \neq 0$.
Then 
\[f(\gamma) = \rho_Vf_V(\gamma) + \sum_{W \in H, W \neq V} f_W + \rho_Wf_W(\gamma) = \sum_{W \in H, W \neq V} f_W + \rho_Wf_W(\gamma) \neq 0\]
which is a contradiction.
\end{proof}

\begin{lemma}
\label{lem:fix}
Let $G$ be a  second countable, locally compact \'etale groupoid with $\go$ Hausdorff.
Suppose $f \in \mathscr{C}(G)$ such that $\supp^o(f)  \subseteq K \subseteq  B$ for some compact set $K$ and open bisection $B$.  Then
$f \in C_c(B)$.
\end{lemma}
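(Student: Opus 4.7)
The plan is to first use Lemma~\ref{lem:shrink} to rewrite $f$ as a finite sum of functions whose individual open supports already lie inside $B$, then to check that each such summand extends continuously to all of $B$, and finally to obtain compact support from the hypothesis on $K$.

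First, apply Lemma~\ref{lem:shrink} with $O := B$ (which is open). This produces a finite collection $F$ of open bisections together with functions $f_U \in C_c(U)$ for each $U \in F$, satisfying $\supp^o(f_U) \subseteq B$ and $f = \sum_{U \in F} f_U$. Since $f$ is zero outside the compact set $K \subseteq B$, it suffices to show that $f|_B$ is continuous on $B$; and for this it is enough to show that each $f_U$, extended by zero on $B \setminus U$, is continuous on $B$.

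Fix $U \in F$ and set $g := f_U$. Because $G$ is second countable, sequential continuity is sufficient. At a point $\beta_0 \in U \cap B$, continuity of $g|_B$ is immediate from the continuity of $g$ on $U$ and the openness of $U$. The decisive case is $\beta_0 \in B \setminus U$, at which $g(\beta_0) = 0$. Suppose for contradiction that $\beta_n \to \beta_0$ in $B$ but $g(\beta_n) \not\to 0$; passing to a subsequence, $|g(\beta_n)| \geq \varepsilon$ for some $\varepsilon > 0$, so each $\beta_n$ lies in $C_\varepsilon := \{\gamma \in U : |g(\gamma)| \geq \varepsilon\}$. This set is compact in $U$ and, crucially, contained in $\supp^o(g) \subseteq B$. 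Extracting a further subsequence, $\beta_{n_j} \to \gamma_0$ in $U$ for some $\gamma_0 \in C_\varepsilon \subseteq B$; since $U$ is open in $G$, this convergence holds in $G$ as well, and since $B$ is open in $G$ and $\gamma_0 \in B$, it holds in $B$. Now $B$ is Hausdorff, because its range map is a homeomorphism onto an open subset of the Hausdorff space $\go$, so limits in $B$ are unique and $\beta_0 = \gamma_0 \in U$, contradicting the assumption $\beta_0 \notin U$.

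Summing over $U \in F$ yields that $f|_B$ is continuous on $B$; combined with $\supp^o(f) \subseteq K \subseteq B$ with $K$ compact, this gives $f \in C_c(B)$. The main obstacle is the continuity check at a point $\beta_0 \in B \setminus U$: since $G$ itself is only locally Hausdorff, one cannot separate $\beta_0$ from limits of support sequences inside $G$, and the argument must be carried out entirely inside the Hausdorff bisection $B$. This is possible precisely because the preliminary application of Lemma~\ref{lem:shrink} arranges that the compact level sets of $g$ sit inside $B$.
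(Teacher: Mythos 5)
Your proof is correct and follows essentially the same route as the paper's: reduce via Lemma~\ref{lem:shrink} to summands $f_U \in C_c(U)$ with $\supp^o(f_U) \subseteq B$, then verify sequential continuity on $B$ by using compactness inside $U$ together with the Hausdorffness of $B$ to pin down limits. Your use of the level set $\{\gamma \in U : |f_U(\gamma)| \geq \varepsilon\}$, which sits inside $\supp^o(f_U) \subseteq B$, places the limit point directly in $B$ and so yields a slightly cleaner contradiction than the paper's case analysis on whether the limit $\alpha_U$ lies in $B$, but the underlying argument is the same.
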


\begin{proof}
Using Lemma~\ref{lem:shrink}, write $f=\sum_{U \in F}f_U$ such that for each $U \in F$, 
 $f_U \in C_c(U)$ and $\supp^o(f_U) \subseteq B$.  We show for each $U \in F$ that $f_U \in C_c(B)$.
 Fix $U \in F$ and let $(\alpha_n) \subseteq B$ be a sequence such that $\alpha_n \to \alpha \in B$.  
 We show $f_U(\alpha_n) \to f_U(\alpha)$.
 Observe that 
 \[B = (\supp^o(f_U) \cap B) \cup  (B\setminus\supp^o(f_U))\]
so we consider two cases.
First suppose $\alpha$ is in the open set $\supp^o(f_U) \cap B$.  Then $\alpha_n$ is in $\supp^o(f_U) \cap B$ eventually
and $f_U(\alpha_n) \to f_U(\alpha)$ by the continuity of $f_U$ in $U$.

For the second case, suppose $\alpha \in B\setminus \supp^o(f_U)$.  Then $f_U(\alpha)=0$.  
We show $f_U(\alpha_n) \to 0$.   If $\alpha_n \in B\setminus \supp^o(f_U)$ eventually, then  $f_U(\alpha_n) =0$ eventually and we are done.  So suppose
 there exists a maximal subsequence such that every $\alpha_{n_j} \in \supp^o(f_U)$.  
 By maximal we mean that the $\alpha_n$ that do not appear in the subsequence are outside $\supp^o(f_U)$.
 Since the closure of the support of $f_U$ in $U$ 
 is a complete subspace of $U$,  there exists $\alpha_U \in U$ such that $\alpha_{n_j} \to \alpha_U$ and hence $f_U(\alpha_{n_j}) \to f_U(\alpha_U)$.
 If $\alpha_U=\alpha$, we are done.  Otherwise, since $B$ is Hausdorff and $\alpha \in B$,  $\alpha_U \notin B$. 
 So $f_U(\alpha_U) = 0$ and hence
$f_U(\alpha_{n_j})  \to 0$ which implies $f_U(\alpha_n) \to 0$ by maximality.

Thus $f \in C(B)$.  Notice $\overline{\supp^o(f)}^B \subseteq K$ and hence $f \in C_c(B)$.
\end{proof}

\begin{lemma}
\label{lem:supports}
Let $G$ be a  second countable, locally compact \'etale groupoid with $\go$ Hausdorff.
Let $\tilde{B}_1, ..., \tilde{B_k}$ and  $B_1, ..., B_k$ be open bisections and $K, K_1, ..., K_k$ are compact  sets such that for each  $1\leq i\leq k$ we have 
\[ \tilde{B}_i \subseteq K_i \subseteq {B}_i.\]
 Suppose $f \in \mathscr{C}(G)$ such that
$\supp^o(f) \subseteq K \subseteq \bigcup_{i=1}^k \tilde{B}_i$.  Then for each $1\leq i\leq k$, there exists
$f_i \in C_c(B_i)$ such that $f = \sum_{i=1}^k f_i$.
\end{lemma}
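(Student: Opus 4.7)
The plan is to combine Lemma~\ref{lem:shrink} with a partition-of-unity argument inside each open bisection, leveraging the compact containment $\tilde{B}_i \subseteq K_i \subseteq B_i$ to upgrade locally defined pieces into functions in $C_c(B_i)$.

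First I would apply Lemma~\ref{lem:shrink} with the open set $O = \bigcup_{i=1}^k \tilde{B}_i$, which contains $K \supseteq \supp^o(f)$, to rewrite $f = \sum_{V} g_V$ where each $g_V \in C_c(V)$ for some open bisection $V$ and $\supp^o(g_V) \subseteq \bigcup_i \tilde{B}_i$.  This reduces the problem to decomposing a fixed $g_V$ across the $B_i$'s.

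Next, I would construct a partition of unity on the open subspace $\supp^o(g_V) \subseteq V$ subordinate to the finite open cover $\{\supp^o(g_V) \cap \tilde{B}_i\}_{i=1}^k$.  Since $V$ is homeomorphic to an open subset of the Hausdorff space $\go$ and $G$ is second countable, $\supp^o(g_V)$ is a second-countable, locally compact Hausdorff, hence normal space, so \cite[Theorem~5.1]{munkres} (as in the proof of Lemma~\ref{lem:shrink}) yields continuous $\phi_{V,i}\colon \supp^o(g_V) \to [0,1]$ with $\supp \phi_{V,i} \subseteq \supp^o(g_V) \cap \tilde{B}_i$ and $\sum_i \phi_{V,i} = 1$ on $\supp^o(g_V)$.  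I would then set $h_{V,i} := \phi_{V,i} g_V$, extended by $0$ outside $\supp^o(g_V)$, and take $f_i := \sum_V h_{V,i}$; since $\sum_i \phi_{V,i} g_V = g_V$, summing over $V$ gives $\sum_i f_i = f$.

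The main technical hurdle is showing $h_{V,i} \in C_c(B_i)$.  Continuity on $V$ goes through as in the proof of Lemma~\ref{lem:shrink}: the estimate $|h_{V,i}| \leq |g_V|$ together with continuity of $g_V$ on $V$ handles points on the boundary of $\supp^o(g_V)$ in $V$.  The subtle case is continuity at $\alpha \in B_i \setminus V$: if $\alpha_n \to \alpha$ in $B_i$ with $h_{V,i}(\alpha_n) \neq 0$, then $\alpha_n \in \supp^o(g_V) \subseteq \overline{\supp^o(g_V)}^V$, so (passing to a subsequence) $\alpha_{n_j} \to \beta$ in $V$.  Hausdorffness of $B_i$ forces $\beta \notin B_i$ (otherwise $\beta = \alpha \notin V$, a contradiction), hence $\beta \notin \tilde{B}_i$; the support condition on $\phi_{V,i}$ (or the vanishing of $g_V$ off $\supp^o(g_V)$) then gives $h_{V,i}(\beta) = 0$, yielding $h_{V,i}(\alpha_{n_j}) \to 0$ and continuity.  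Compact support follows because $\supp^o(h_{V,i}) \subseteq \tilde{B}_i \subseteq K_i$ with $K_i$ compact, hence closed in the Hausdorff space $B_i$.  This Hausdorff separation inside $B_i$ is precisely where the auxiliary compact sets $K_i$ are needed, paralleling their role in Lemma~\ref{lem:fix}, and is what I expect to be the most delicate step.
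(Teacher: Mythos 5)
Your proof is correct, but it takes a genuinely different route from the paper's. The paper argues by induction on $k$ (following the strategy of Tu): it sets $F=\tilde{B}_{k+1}\setminus\bigcup_{i=1}^{k}\tilde{B}_i$, shows $f|_F$ is continuous on this closed Hausdorff subset of $\tilde{B}_{k+1}$, applies Tietze's extension theorem to produce $f_{k+1}\in C_c(\tilde{B}_{k+1})$ agreeing with $f$ on $F$, and feeds $f-f_{k+1}$ back into the inductive hypothesis, with the base case supplied by Lemma~\ref{lem:fix}. You instead do a one-shot decomposition: after Lemma~\ref{lem:shrink} you split each piece $g_V$ by a partition of unity on $\supp^o(g_V)$ subordinate to $\{\supp^o(g_V)\cap\tilde{B}_i\}$, which bypasses both the induction and Lemma~\ref{lem:fix}. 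The price is that you must verify $h_{V,i}=\phi_{V,i}g_V\in C_c(B_i)$ directly, and your handling of the delicate case $\alpha\in B_i\setminus V$ --- extracting a subsequence converging in $\overline{\supp^o(g_V)}^V$ (sequentially compact by second countability, the same device the paper uses), invoking Hausdorffness of $B_i$ to force the limit $\beta$ out of $B_i$, and then using $\supp\phi_{V,i}\subseteq\tilde{B}_i$ to get $h_{V,i}(\beta)=0$ --- is exactly the right argument and mirrors the sequence manipulations in the paper's Lemmas~\ref{lem:fix} and~\ref{lem:shrink}; to conclude for the full sequence you should phrase it as ``every subsequence on which $h_{V,i}$ is nonzero has a further subsequence along which $h_{V,i}\to 0$.'' Your identification of where the compact sets $K_i$ enter (closedness of $K_i$ in the Hausdorff space $B_i$, giving compactness of $\overline{\supp^o(h_{V,i})}^{B_i}$) matches their role in the paper. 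In short, the paper localizes the non-Hausdorff difficulty to a single Tietze extension per inductive step, while you distribute it across the continuity check for each $h_{V,i}$; both yield summands supported in the $\tilde{B}_i$, as required.
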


\begin{proof}
 We do a proof by induction on $k$ using the strategy inspired by the proof of \cite[Proposition~4.1]{Tu}.
The base case $k=1$ follows from Lemma~\ref{lem:fix}.
For the inductive step, suppose for every $h \in \mathscr{C}(G)$ with $\supp^o(h)$ inside some $K \subseteq \bigcup_{i=1}^k \tilde{B}_i$
with $k \geq 1$ satisfying the hypotheses of the lemma,  there exists  $h_i \in C_c(B_i)$  for each $1 \leq i \leq k$ such that  $h=\sum_{i=1}^k h_i$.  
Fix  $f \in \mathscr{C}(G)$ such that $f=\sum_{U \in H}f_U$ and  $\supp^o(f) \subseteq  K \subseteq \bigcup_{i=1}^{k+1} \tilde{B}_i$ as in the hypotheses of the lemma.  

Set \[
F =  \tilde{B}_{k+1} \setminus\bigcup_{i=1}^k \tilde{B_i}
.\]
Then $F$ is Hausdorff and  closed in $\tilde{B}_{k+1}$.
We claim that $f|_{F}$ is continuous on $F$.  To see this, it suffices to show each $f_U|_F$ is continuous on $F$.  Fix $U \in H$.
We can assume that $\supp^o(f_U) \subseteq  \bigcup_{i=1}^{k+1} \tilde{B}_i$ by Lemma~\ref{lem:shrink}.
Let $(\alpha_n) \subseteq F$ such that $\alpha_n \to \alpha \in F$. 
 If $\alpha \in \supp^o(f_U)$ which is open, 
 then $\alpha_n \in \supp^o(f_U)$ eventually and we get $f_U|_F(\alpha_n) \to f_U|_F(\alpha)$ by continuity in $U$.  
 So assume $\alpha \notin \supp^o(f_U)$.  We consider two cases:  First suppose 
$\alpha_n \in F \setminus \supp^o(f_U)$ eventually.  
Then $f_U|_F(\alpha_n) =0= f_U|_F(\alpha)$ eventually. 
For the second cases, suppose there exists a maximal subsequence such that  for every $n_j$, 
 $\alpha_{n_j} \in \supp^o(f_U) \cap F$.  Then if  $\alpha_n$ is not included in the subsequence,  $f_U|_F(\alpha_n)=0$.
 Since the closure of $\supp^o(f_U)$ is a complete subspace of $U$,  there exists $\alpha_U \in U$ such that  $\alpha_{n_j} \to \alpha_U$
 and $f_U(\alpha_{n_j}) \to f_U(\alpha_U)$.  If $\alpha_U \in F$, then $\alpha_U = \alpha$ because $F$ is Hausdorff.  Finally, suppose $\alpha_U \notin F$.  
 We claim that $\alpha_U \notin \supp^o(f_U)$ which suffice for then 
 $f_U|_F(\alpha_{n}) \to  f_U|_F(\alpha)$.  
 By way of contradiction, suppose $\alpha_U \in \supp^o(f_U)$.  
 So there exists $\tilde{B}_i,$  with $i\neq k+1$ such that $\alpha_U \in \tilde{B}_i$. Since $\tilde{B}_i$ is open,  there exists $\alpha_{n_j}$ such that $\alpha_{n_j} \in \tilde{B}_i$. But by assumption every $a_{n_j} \in F$ and $F \cap \tilde{B_i} = \emptyset$,  which is a contradiction.  
 Thus we have shown $f_U|_F$ is continuous on $F$ and hence so is $f|_F$.

Notice $f|_F$ is 0 outside 
\[K' = K  \setminus\bigcup_{i=1}^k \tilde{B_i} \]
which is closed in $K$ and hence compact.  Since $K' \subseteq F$,  we have $f|_F \in C_c(F)$.
Now we apply Tietze's extension theorem to get a function $f_{k+1} \in C_c(\tilde{B}_{k+1}) \subseteq C_c(B_{k+1})$ such that $f_{k+1}|_F = f|_F$. 
Now $f=(f-f_{k+1}) + f_{k+1}$.   Notice that $f-f_{k+1} \in \mathscr{C}(G)$ and is 0 on $F$.  In fact 
\[
\supp^o(f-f_{k+1}) \subseteq \bigcup_{i=1}^k \tilde{B_i}.
\] 
Now apply the inductive hypothesis to $f-f_{k+1}$ to get the remaining $f_i$'s.
\end{proof}

\begin{lemma}
\label{lem:etalebdsummands}
Let $G$ be a  second countable, locally compact \'etale groupoid with $\go$ Hausdorff.  
Let $B_1, \dots, B_k$ be open bisections.
Suppose $f \in \mathscr{C}(G)$ such that $f= \sum_{i=1}^k f^0_i$ where each $f_i^0 \in C_c(B_i)$.
Let $\epsilon > 0$.
Then for each $i$ there exists  $f_{i} \in C_c(B_i)$ such that 
\begin{enumerate}
\item \label{it1:etalebdsummand}$\|f_{i}\|_{\infty}\leq 2\|f\|_{\infty}+\epsilon$ and 
\item\label{it2:etalebdsummand}$f = \sum_{i=1}^k f_{i}$.
\end{enumerate}
\end{lemma}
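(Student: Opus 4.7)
The plan is to parallel the strategy of Lemma~\ref{lem:boundedsummands}, substituting a continuous partition of unity on the Hausdorff open bisection $B_1$ for the disjointification of compact open covers used in the ample setting. The Hausdorffness of each $B_i$ (it maps homeomorphically onto an open subset of the Hausdorff space $\go$) gives the local compactness and normality needed to produce such a partition of unity, and the weakening from the sharp bound $\|f\|_\infty + \epsilon$ in Lemma~\ref{lem:boundedsummands} to $2\|f\|_\infty + \epsilon$ here reflects the unavoidable ``leakage'' introduced by continuous rather than sharp cutoffs in a non-Hausdorff ambient groupoid.

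I would induct on the number of ``bad'' indices $i$ with $\|f_i^0\|_\infty > 2\|f\|_\infty + \epsilon$: if there are none, take $f_i = f_i^0$; otherwise, say $i = 1$ is bad, and perform a correction step producing a new tuple with strictly fewer bad indices and with $\|f_j\|_\infty \leq \|f_j^0\|_\infty$ for $j \neq 1$, then iterate. For the correction, consider
\[
C = \{\gamma \in B_1 : |f_1^0(\gamma)| \geq \|f\|_\infty + \epsilon/2\},
\]
which is closed in $B_1$ and contained in the compact set $\overline{\supp^o(f_1^0)}^{B_1}$. For each $\gamma \in C$ we have $|f(\gamma)| \leq \|f\|_\infty < |f_1^0(\gamma)|$, so $\sum_{j\neq 1}f_j^0(\gamma) \neq 0$ and $\gamma \in B_j$ for some $j \neq 1$. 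Setting $T_\gamma = \{j : \gamma \in B_j\}$ and $V_\gamma = \bigcap_{j \in T_\gamma} B_j$ (an open bisection containing $\gamma$), one extracts a finite subcover $V_1,\dots,V_m$ of $C$; using that $B_1$ is locally compact Hausdorff one then refines to precompact open sets with closure compact in the corresponding $V_n$, and builds a partition of unity $\{\rho_n\} \subseteq C_c(B_1)$ with $\overline{\supp^o(\rho_n)}^{B_1}$ compact in $V_n$ and $\sum_n \rho_n \equiv 1$ on $C$. The compactness of $\overline{\supp^o(\rho_n)}^{B_1}$ inside $V_n \subseteq B_j$ for $j \in T_n$ is what lets each product $\rho_n f_j^0$ extend by zero to a member of $C_c(B_j)$.

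The correction is then defined by
\[
f_1 = f_1^0 + \sum_{n=1}^m \rho_n \sum_{j \in T_n,\,j \neq 1} f_j^0, \qquad f_j = f_j^0 - \sum_{n\,:\, j \in T_n} \rho_n f_j^0 \quad (j \neq 1),
\]
and direct computation gives $\sum_i f_i = f$ and $|f_j(\gamma)| \leq |f_j^0(\gamma)|$ for $j \neq 1$ (since $\sum_{n : j \in T_n}\rho_n \in [0,1]$), preserving the inductive hypothesis for the good indices. The main difficulty is bounding $|f_1|$ by $2\|f\|_\infty + \epsilon$: off $C$ one has $|f_1^0| < \|f\|_\infty + \epsilon/2$ directly, but on $C$ one must rewrite $\sum_{j \in T_n,\,j\neq 1}f_j^0 = f - f_1^0 - \mathrm{err}_n$ where $\mathrm{err}_n(\gamma) = \sum_{l \notin T_n,\,\gamma\in B_l}f_l^0(\gamma)$ collects the leakage from those $B_l$'s into which $\gamma \in V_n$ happens to fall even though $\gamma_n$ does not. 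In a Hausdorff groupoid one could shrink $V_n$ to avoid every such $B_l$, but in the non-Hausdorff setting $\gamma_n \in G \setminus B_l$ need not admit an open neighborhood disjoint from $B_l$ (since $B_l$ is not closed), so this error is genuinely present. Estimating $\mathrm{err}_n$ using the inductive bound on $\|f_l^0\|_\infty$ and the constraint $|f| \leq \|f\|_\infty$, together with the partition-of-unity identity $\sum_n \rho_n = 1$ on $C$, is the principal technicality; the looseness at these leakage points is exactly what forces the factor-of-$2$ bound rather than the sharper $\|f\|_\infty + \epsilon$ available in the ample setting.
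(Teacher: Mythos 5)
Your overall architecture matches the paper's: iterate over the indices $i$ with $\|f_i^0\|_\infty > 2\|f\|_\infty+\epsilon$, cover the compact bad set $B_i\setminus O$ (your $C$) by open sets sitting inside intersections of the $B_j$'s, take a subordinate partition of unity in $C(B_i)$, and transfer mass via $f_i = f_i^0 + \sum_p\rho_p(\cdots)$ and $f_j = f_j^0 - \sum_{\{p : j\in S_p\}}\rho_p f_j^0$. Even your index sets agree with the paper's: the maximal $S_\gamma$ used there is exactly your $T_\gamma\setminus\{i\}$, and the paper's $C_{\gamma,S_\gamma}\subseteq B_i\cap\bigcap_{j\in S_\gamma}B_j$ is your $V_\gamma$, shrunk further.

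However, the step you yourself flag as ``the principal technicality'' --- the bound $\|f_1\|_\infty\leq 2\|f\|_\infty+\epsilon$ --- is a genuine gap, and the route you sketch does not close it. Writing $\sum_{j\in T_n,\,j\neq 1}f_j^0 = f - f_1^0 - \mathrm{err}_n$ on $V_n$ and estimating $\mathrm{err}_n(\alpha)=\sum_{l\notin T_n,\,\alpha\in B_l}f_l^0(\alpha)$ via the sup-norms of the $f_l^0$ gives at best something of order $(k-1)\sup_l\|f_l^0\|_\infty$; worse, some of those $l$ may be still-uncorrected bad indices whose sup-norms are not controlled by $\|f\|_\infty$ at all. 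The paper avoids the leakage term entirely by never comparing the partial sum to the function $f$ on a neighborhood: since $f_i^0+\sum_{j\in S_\gamma}f_j^0$ is continuous on $B_i\cap\bigcap_{j\in S_\gamma}B_j$ and equals $f(\gamma)$ at the point $\gamma$ exactly, one shrinks $C_{\gamma,S_\gamma}$ so that this partial sum stays within $\epsilon/2$ of the \emph{constant} $f(\gamma)$ on all of $C_{\gamma,S_\gamma}$ (condition \eqref{eqn:etaleC}); the comparison target is a number, not $f(\alpha)$, so the other $B_l$'s never enter. Relatedly, your diagnosis of where the factor $2$ comes from is off: it is not the leakage but the partition of unity blending, on the overlap of $O$ with the sets $C_{\gamma_p,S_p}$, the original value $f_i^0(\alpha)$ (bounded by $\|f\|_\infty+\epsilon/2$ on $O$) with the corrected value $\approx f(\gamma_p)$ (bounded by $\|f\|_\infty$), each contributing up to $\|f\|_\infty+\epsilon/2$. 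If you add the shrinking condition on your $V_n$'s and then estimate $f_1(\alpha)=\rho_0(\alpha)f_1^0(\alpha)+\sum_{n\geq 1}\rho_n(\alpha)\bigl(f_1^0(\alpha)+\sum_{j\in T_n,\,j\neq 1}f_j^0(\alpha)\bigr)$ term by term, your argument becomes the paper's.
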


\begin{proof}
In what follows, we adjust these functions so that  \eqref{it2:boundedsummand} still holds 
in order to ensure  \eqref{it1:etalebdsummand} holds as well.  
Fix $\epsilon>0$ and fix $1 \leq i \leq k$ such that $\|f^0_i\|_{\infty}>2 \|f\|_{\infty}+\epsilon$.   If no such $i$ exists, we are finished. 
Set 
\[O = \{\gamma \in B_i : |f^0_{i}(\gamma)| < \|f\|_{\infty} + \frac{\epsilon}{2}\}.\]
Then $O$ is open.
Since  $\|f^0_i\|_{\infty}> \dfrac{\|f^0_i\|_{\infty}}{2} > \|f\|_{\infty}+\frac{\epsilon}{2}$, the set $B_i \setminus O$ is nonempty. 
Further $B_i\setminus O$ is closed in $B_i$ and contained in the compact support of $f_i^0$  (also inside $B_i$) and hence $B_i\setminus O$ is compact.  

Fix $\gamma \in B_i \setminus O$.  
Then $|f^0_i(\gamma)| > \|f\|_{\infty}$ so in particular $f(\gamma) \neq f^0_i(\gamma)$ and hence there exists a maximal nonempty 
\[S_{\gamma}\subseteq \{1, ..., i-1, i+1, ..., k\}\] 
such that 
\[\gamma \in B_i \cap \left(\bigcap_{j \in S_{\gamma}}B_j\right)\quad \text{ and } \quad 
f(\gamma) = {f}_{i}^0(\gamma) + \sum_{j \in S_{\gamma}} {f}_{j}^0(\gamma).\]

Choose an open bisection 
$C_{\gamma, S_{\gamma}} \subseteq B_i \cap \left( \bigcap_{j \in S_{\gamma}}B_j \right)$ such that 
\begin{itemize}
\item $\gamma \in C_{\gamma,s_{\gamma}}$,
\item the restriction of $ ({f}_{i}^0 + \sum_{j \in S_{\gamma}} {f}_{j}^0)$ to $C_{\gamma, S_{\gamma}}$ is continuous and
\item  for each 
$\alpha \in C_{\gamma, S_{\gamma}}$ we have 
\begin{equation}
\label{eqn:etaleC}|({f}_{i}^0(\alpha) + \sum_{j \in S_{\gamma}} {f}_{j}^0(\alpha)) - f(\gamma)| \leq \frac{\epsilon}{2}.
\end{equation}
\end{itemize}
Then the collection $\{C_{\gamma, S_\gamma}\}_{\gamma \in B_i \setminus O}$ covers $B_i \setminus O$ so there exists a finite subcover $\{C_{\gamma_p, S_{p}}\}_{p=1}^s$.   

Now, we adjust the functions.    For notational convenience, 
let $C_{\gamma_0, S_{0}} =O$.  
Choose a partition of unity $\{\rho_p\}_{p=0}^s$ in $C(B_i)$ subordinate to the cover 
$\{C_{\gamma_p, S_p}\}_{p=0}^s$.   Define
\[
f_{i}={f}_{i}^0 + \sum_{j=1}^k \  \sum_{\{p: j \in S_p\}} \rho_p{f}_{j}^0 \ \text {  (with pointwise multiplication)}.\]
To ensure \eqref{it2:etalebdsummand} still holds,  for $j \neq i$ define 
 \[
f_{j} = {f}_{j}^0 - \sum_{\{p: j \in S_p\}} \rho_p{f}_{j}^0 \text {  (with pointwise multiplication)}.\]
Since the compactly supported functions are an ideal in the ring of continuous functions, 
we have $f_i \in C_c(B_i)$  and for each $j$,  $f_j \in C_c(B_j)$.

Notice for $j \neq i$ we have  $\|f_{j} \|_{\infty} \leq \| {f}_{j}^0\|_{\infty}$.
We claim that $\|f_{i}\|_{\infty} \leq 2\|f\|_{\infty}+\epsilon$.  
To prove the claim, fix $\alpha \in G$.  It suffices to show $|f_{i}(\alpha)| \leq 2\|f\|_{\infty}+\epsilon$.   
We compute
\begin{align*}
|f_{i}(\alpha)| &=|{f}_{i}^0(\alpha) + \sum_{j=1}^k \  \sum_{\{p: j \in S_p\}} \rho_p(\alpha){f}_{j}^0(\alpha)| \\
 &=|\sum_{p=0}^s\rho_p(\alpha){f}_{i}^0(\alpha) + \sum_{p=1}^s \  \sum_{j \in S_p} \rho_p(\alpha){f}_{j}^0(\alpha)| \\
 &=|\rho_0(\alpha){f}_{i}^0(\alpha) + \sum_{p=1}^s \ \rho_p(\alpha)({f}_{i}^0(\alpha)+ \sum_{j \in S_p} {f}_{j}^0(\alpha))| \\
 &\leq(\|f\|_{\infty}+\frac{\epsilon}{2})  + \sum_{p=1}^s \ \rho_p(\alpha)|{f}_{i}^0(\alpha)+ \sum_{j \in S_p} {f}_{j}^0(\alpha)| \\
 &\leq(\|f\|_{\infty}+\frac{\epsilon}{2})  + \sum_{p=1}^s \ \rho_p(\alpha)( |f(\gamma_p)| +\frac{\epsilon}{2}) \text{ by \eqref{eqn:etaleC}}\\
 &\leq(\|f\|_{\infty}+\frac{\epsilon}{2})  + \sum_{p=1}^s \ \rho_p(\alpha)( \|f\|_{\infty} +\frac{\epsilon}{2})\\
&\leq 2\|f\|_{\infty}+\epsilon.
\end{align*}

Now check if there are any $j$ such that $\|f_{j}\|_{\infty} >2 \|f\|_{\infty}+\epsilon$.  
If so, pick one such $j$ and repeat the above process to get a new collection of functions continuing 
until no such $j$ exists and hence \eqref{it1:etalebdsummand} holds. 
\end{proof}

\begin{proof}[Proof of Theorem~\ref{thm:etaleautobound}.]
Fix a $*$-homomorphism $\pi:\mathscr{C}(B)(G) \to B(\mathscr{H})$. 
Suppose $f_n \to f$ uniformly and $\supp^o(f_n) \subseteq K$ eventually for some compact $K$.  
 We claim that there are open bisections $\tilde{B}_1, ..., \tilde{B}_k$ and $B_1, ..., B_k$ and compact sets
 $K_1, ... K_k$ such that for each $i, 1 \leq i \leq k$ we have
 \[\tilde{B}_i \subseteq K_i \subseteq B_i \quad
 \text{ and } \quad K \subseteq \bigcup_{i=1}^k \tilde{B}_i.\]
To see this, for each $\gamma \in K$, let $B_{\gamma}$ be an open bisection containing $\gamma$.  
By local compactness, $\gamma$ has a neighborhood base of compact sets, so we can find $K_{\gamma}$ and $\tilde{B}_{\gamma}$ such that 
\[\gamma \in \tilde{B}_{\gamma} \subseteq K_\gamma \subseteq {B}_{\gamma}.\]
Then the $\tilde{B}_{\gamma}$'s cover $K$ and we can find a finite subcover as needed.  

Fix $\epsilon >0$.  Since $\supp^o(f_n-f) \subseteq K$ eventually, we can apply
Lemma~\ref{lem:supports} and then Lemma~\ref{lem:etalebdsummands} to eventually write each $f_n-f = \sum_{i=1}^k f_{n,i}$  such that each $f_{n,i} \in C_c(B_i)$ with  $\supp^o(f_{n,i}) \subseteq B_i$ and   
\[\|f_{n,i}\|_{\infty}\leq 2\|f_n-f \|_{\infty}+ \dfrac{\epsilon}{2k}.\]  
Choose $N$ so that if $n \geq N$, then 
\[ \|f_n-f\|_{\infty} \leq \dfrac{\epsilon}{4k}.\]
Now we compute for $n \geq N$, applying \cite[Proposition~3.14]{Exe08} at the second inequality ,
\[\|\pi(f_n-f)\|\leq \sum_{i=1}^k\|\pi( f_{n,i})\| \leq \sum_{i=1}^k \|f_{n,i}\|_{\infty} \leq k (\|f_n-f\|_{\infty}+\dfrac{\epsilon}{2k}) \leq \epsilon.  \]
\end{proof}

The following corollary follows immediately from Theorem~\ref{thm:etaleautobound} and the disintegration theorem \cite[Theorem~7.8]{MW08}.

\begin{cor}
\label{cor:etaleautobound}Let $G$ be a  second countable, locally compact \'etale groupoid with $\go$ Hausdorff.  
Suppose $\pi:\mathscr{C}(G) \to B(\mathscr{H})$ is a *-homomorphism for some Hilbert space $\mathscr{H}$.  Then $\pi$ is bounded with respect to the $I$-norm on $\mathscr{C}(G)$.
\end{cor}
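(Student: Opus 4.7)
The plan is short because the real content has already been established in Theorem~\ref{thm:etaleautobound}: the corollary is essentially a direct translation from continuity in the inductive limit topology to boundedness in the $I$-norm via Renault's disintegration theorem, exactly as the analogous step was carried out in the proof of Corollary~\ref{cor:autobound}.

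First I would invoke Theorem~\ref{thm:etaleautobound} to obtain that the given $*$-homomorphism $\pi:\mathscr{C}(G)\to B(\mathscr{H})$ is continuous with respect to the inductive limit topology on $\mathscr{C}(G)$. Since the domain is already $\mathscr{C}(G)$ rather than $A(G)$, there is no need for the intermediate density/extension step that was required in Corollary~\ref{cor:autobound}. The hypotheses of the corollary (second countable, locally compact, \'etale, $\go$ Hausdorff) match exactly those of the theorem, so nothing extra needs to be checked.

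Second, I would cite Renault's disintegration theorem \cite[Theorem~7.8]{MW08}. That result takes as input a $*$-representation of $\mathscr{C}(G)$ on a Hilbert space that is continuous with respect to the inductive limit topology, and outputs a disintegration yielding $I$-norm boundedness. Applying it to $\pi$ gives the conclusion that $\pi$ is bounded with respect to $\|\cdot\|_I$.

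There is no real obstacle here since the heavy lifting — the inductive-limit continuity of $\pi$ on the non-Hausdorff space $\mathscr{C}(G)$ — has already been done in Theorem~\ref{thm:etaleautobound}, and the disintegration theorem plugs in as a black box. In particular, the proof is a one- or two-sentence argument, and is consistent with the remark preceding the corollary statement in the paper.
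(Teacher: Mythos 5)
Your proposal matches the paper exactly: the authors state that the corollary ``follows immediately from Theorem~\ref{thm:etaleautobound} and the disintegration theorem \cite[Theorem~7.8]{MW08},'' which is precisely your two-step argument. Your observation that no density/extension step is needed (unlike in Corollary~\ref{cor:autobound}) is also correct, since $\pi$ is already defined on all of $\mathscr{C}(G)$.
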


\section{Reconciling our construction and those done previously}
\label{sec:same}

\noindent{\textbf{The reduced C*-algebra}}  The restriction of the left regular representation $\pi$ of $\mathscr{C}(G)$
to $A(G)$ is precisely the left regular representation of $A(G)$.   So the reduced C*-algebra we have 
constructed $\overline{\pi(A(G))}$ is contained in the usual one $\overline{\pi(\mathscr{C}(G))}$.
It suffices to show $\pi(A(G))$ is dense in $\pi(\mathscr{C}(G))$. 
Fix $\pi(f) \in \pi(\mathscr{C}(G))$.  Lemma~\ref{lem:ILTconv} says $A(G)$ is dense in $\mathscr{C}(G)$ in the inductive limit topology so there exists a sequence $(f_n) \subseteq A(G)$ such that $f_n \to f$ in the inductive limit topology.  
From Proposition~\ref{prop:autobound},  $\pi$ is continuous with respect to the inductive limit topology so $\pi(f_n) \to \pi(f)$ as needed. 

\medskip

\noindent{\textbf{The full C*-algebra}} 
\begin{thm} Let $G$ be a locally compact \'etale groupoid with Hausdorff unit space.
\label{thm:recovery}
Consider the following 4 norms where the first is on $A(G)$ and the other three are on $\mathscr{C}(G)$: 
\begin{align*}
\|f\|_1 &= \sup \{\|\pi(f)\| : \pi:A(G) \to B(\mathscr{H})\text{ is a *-hm for some } \mathscr{H}\} \text{ (Theorem~\ref{thm:fullnorm})}\\
\|f\|_2 &=\sup \{\|\pi(f)\| : \pi:\mathscr{C}(G) \to B(\mathscr{H})\text{ is a *-hm for some } \mathscr{H}\}\text{ (for example, see \cite[Def.~3.17]{Exe08}})\\
\|f\|_3 &= \sup \{\|\pi(f)\| : \pi:\mathscr{C}(G) \to B(\mathscr{H})\text{ is a *-hm that is continuous with respect to the}\\
& \hspace{1.2cm} \text{ inductive limit topology for some } \mathscr{H}\}\text{ (for example, see \cite[Def.~II.1.3]{Ren80}})\\
\|f\|_4 &= \sup \{\|\pi(f)\| : \pi:\mathscr{C}(G) \to B(\mathscr{H})\text{ is an I-norm bounded *-hm for some } \mathscr{H}\}\\
& \hspace{1.2cm} \text{(for example, see \cite[Page~101]{Pat99}})
\end{align*}
\begin{enumerate}
\item\label{it1:recovery} If $G$ is ample, then our C*-algebra  $\overline{A(G)}^{\|\cdot\|_1}$ (see Def.~\ref{def:main}) is the same as $\overline{\mathscr{C}(G)}^{\|\cdot\|_i}$ for $i=2,3$.  
\item\label{it2:recovery} If $G$ is a second countable, then $\overline{\mathscr{C}(G)}^{\|\cdot\|_i}$ for $i=2,3,4$ are all the same.  
\end{enumerate}
\end{thm}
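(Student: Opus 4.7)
The plan is to dispatch part~(\ref{it2:recovery}) quickly using Corollary~\ref{cor:etaleautobound}, and to handle part~(\ref{it1:recovery}) by separating norm-equality on $A(G)$ from density of $A(G)$ in $\mathscr{C}(G)$.

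For part~(\ref{it2:recovery}), the containments among $I$-norm bounded *-homomorphisms, ILT-continuous *-homomorphisms, and all *-homomorphisms give $\|f\|_4 \leq \|f\|_3 \leq \|f\|_2$ for every $f \in \mathscr{C}(G)$. Theorem~\ref{thm:etaleautobound} and Corollary~\ref{cor:etaleautobound} show that all three classes of *-homomorphisms from $\mathscr{C}(G)$ to $B(\mathscr{H})$ coincide when $G$ is second countable, so the three norms are pointwise equal and the completions agree.

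For part~(\ref{it1:recovery}), I would first establish $\|f\|_1 = \|f\|_2 = \|f\|_3$ on $A(G)$. The inequalities $\|f\|_i \leq \|f\|_1$ for $i = 2, 3$ follow by restriction. For the reverse, I would extend each *-homomorphism $\pi : A(G) \to B(\mathscr{H})$ to a *-homomorphism $\tilde{\pi} : \mathscr{C}(G) \to B(\mathscr{H})$: given $f \in \mathscr{C}(G)$, Lemma~\ref{lem:ILTconv} supplies $f_n \in A(G)$ with $f_n \to f$ in the inductive limit topology, hence $f_n - f_m \to 0$ in the inductive limit topology, and the uniform estimate inside the proof of Proposition~\ref{prop:autobound} (which bounds $\|\pi(g)\|$ by a constant depending only on a compact set containing $\supp^o(g)$ times $\|g\|_\infty$) forces $(\pi(f_n))$ to be Cauchy in $B(\mathscr{H})$. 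Setting $\tilde{\pi}(f) := \lim_n \pi(f_n)$ and verifying well-definedness by an interleaving argument yields an ILT-continuous *-homomorphism extending $\pi$. Then $\|\pi(f)\| = \|\tilde{\pi}(f)\| \leq \|f\|_i$ for every $\pi$ and every $f \in A(G)$, so taking the supremum over $\pi$ gives $\|f\|_1 \leq \|f\|_i$ for $i = 2, 3$.

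Next I would show $A(G)$ is dense in $\mathscr{C}(G)$ with respect to $\|\cdot\|_2$ (and therefore $\|\cdot\|_3$). Given $f \in \mathscr{C}(G)$, by \cite[Proposition~3.10]{Exe08} write $f = \sum_{i=1}^n f_i^0$ with $f_i^0 \in C_c(B_i)$ for compact open bisections $B_i$. Since $\go$ is totally disconnected and Hausdorff, each $B_i$ is a compact totally disconnected Hausdorff space, so Stone--Weierstrass produces locally constant approximants $f_{i,m} \in A(G) \cap C_c(B_i)$ with $f_{i,m} \to f_i^0$ uniformly. For $g_m := \sum_i f_{i,m} \in A(G)$, Exel's \cite[Proposition~3.14]{Exe08} applied to each bisection-supported difference $f_i^0 - f_{i,m}$ gives
\[
\|\sigma(f - g_m)\| \leq \sum_{i=1}^n \|f_i^0 - f_{i,m}\|_\infty \to 0
\]
uniformly in the *-homomorphism $\sigma$, so $\|f - g_m\|_2 \to 0$. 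Combining the norm equality on $A(G)$ with this density yields $\overline{A(G)}^{\|\cdot\|_1} = \overline{\mathscr{C}(G)}^{\|\cdot\|_i}$ for $i = 2, 3$.

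The main obstacle is the extension step: Proposition~\ref{prop:autobound} as stated concerns ILT-convergent sequences whose limit is already in $A(G)$, whereas we need to apply it to Cauchy sequences whose limit may lie in $\mathscr{C}(G) \setminus A(G)$. A careful reading of its proof shows the bound $\|\pi(f_n - f_m)\| \leq k(\|f_n - f_m\|_\infty + \epsilon/(2k))$ depends only on the common compact support and the uniform norm, never on the limit being in $A(G)$, so Cauchy-ness of $(\pi(f_n))$ is immediate; verifying that $\tilde{\pi}$ is a well-defined linear, multiplicative, $*$-preserving map is then routine.
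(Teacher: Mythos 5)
Your proposal is correct and follows essentially the same route as the paper: restriction gives $\|\cdot\|_{2},\|\cdot\|_{3}\le\|\cdot\|_{1}$ on $A(G)$, extending each $A(G)$-representation to an ILT-continuous representation of $\mathscr{C}(G)$ via Lemma~\ref{lem:ILTconv} and the uniform estimate behind Proposition~\ref{prop:autobound} gives the reverse, and C*-norm density of $A(G)$ in $\mathscr{C}(G)$ finishes part~(\ref{it1:recovery}), while part~(\ref{it2:recovery}) is exactly Theorem~\ref{thm:etaleautobound} plus Corollary~\ref{cor:etaleautobound}. The only differences are cosmetic: you carry out the density step in $\|\cdot\|_{2}$ directly via \cite[Proposition~3.14]{Exe08} where the paper phrases it in $\|\cdot\|_{3}$ via inductive-limit continuity, and you make explicit the Cauchy-sequence extension argument (and the observation that the bound in Proposition~\ref{prop:autobound} depends only on the compact set and the uniform norm) that the paper leaves implicit.
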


\begin{remark}
Typically $\|\cdot\|_2$ only appears in the Hausdorff setting where the existence of the  ``inductive limit topology'' on $C_c(G)$ is guaranteed, see for example, \cite[Proposition~D.1]{tfb}.   When $G$ is not Hausdorff, this definition of $\|\cdot\|_2$ still makes sense but is less helpful.  \end{remark}

\begin{proof}
For item~\ref{it1:recovery}, suppose $G$ is ample. 
Note that every $*$-homomorphism $\pi:\mathscr{C}(G) \to B(\mathcal{H})$ restricts to a $*$-homomorphism on $A(G)$.  By Proposition~\ref{prop:autobound} and  Lemma~\ref{lem:ILTconv}, any $*$-homomorphism on $A(G)$ extends to a $*$-homomorphism on $\mathscr{C}(G)$ that is continuous with respect to the inductive limit topology.  Putting this all together gives
$\|f\|_1 = \|f\|_2=\|f\|_3$ for $f \in A(G)$.
 Thus we have  \[\overline{A(G)}^{\|\cdot\|_1}=\overline{A(G)}^{\|\cdot\|_2}= \overline{A(G)}^{\|\cdot\|_3}\subseteq \overline{\mathscr{C}(G)}^{\|\cdot\|_i}\]
for $i=2,3$.  Lemma~\ref{lem:ILTconv} also implies $\mathscr{C}(G) \subseteq \overline{A(G)}^{\|\cdot\|_3}$ and item~\ref{it1:recovery} follows.

For item~\ref{it2:recovery}, when $G$ is second countable, Theorem~\ref{thm:etaleautobound} and Corollary~\ref{cor:etaleautobound} imply all the norms on $\mathscr{C}(G)$ are the same.     
\end{proof}

\bibliography{bibliography}
\bibliographystyle{plain}
 
\end{document}